\renewcommand{\a}{\alpha}
\renewcommand{\th}{\theta}
\renewcommand{\k}{\kappa}
\renewcommand{\l}{\lambda}
\newcommand{\n}{\nu}
\newcommand{\ph}{\phi}
\def\ph{\phi}
\def\md#1{\ \mbox{\rm(mod }{#1})}
\def\npp#1{N_{\ph}^+(#1)}
\def\ol{\overline}
\def\ph{\phi}
\newcommand{\Q}{{\mathbb Q}}
\newcommand{\Z}{{\mathbb Z}}
\newcommand{\F}{{\mathbb F}}
\def\md#1{\ \mbox{\rm(mod }{#1})}
\def\npp#1{N_{\ph}^+(#1)}
\newcommand{\aF}{\mathfrak a}
\newcommand{\bF}{\mathfrak b}
\newcommand{\pF}{\mathfrak p}
\newcommand{\qF}{\mathfrak q}
\newtheorem{theorem}{Theorem}[section]
\newtheorem{proposition}[theorem]{Proposition}
\newtheorem{lemma}[theorem]{Lemma}
\newtheorem{corollary}[theorem]{Corollary}
\theoremstyle{definition}
\theoremstyle{remark}
\newtheorem{remark}[theorem]{Remark}
\newtheorem{example}[theorem]{Example}
\begin{document}
\title[]{ On non-monogenic number fields defined by trinomials of  type $x^n +ax^m+b$}
\textcolor[rgb]{1.00,0.00,0.00}{}
\author{Hamid Ben Yakkou }\textcolor[rgb]{1.00,0.00,0.00}{}
\address{Faculty of Sciences Dhar El Mahraz, P.O. Box  1874 Atlas-Fes , Sidi mohamed ben Abdellah University,  Morocco}\email{beyakouhamid@gmail.com}
\keywords{ Power integral basis; trinomials; theorem of Ore; prime ideal factorization; common index divisor} \subjclass[2010]{11R04;
11R16; 11R21}
%\date{\today}
\maketitle
\vspace{0.3cm}
\begin{abstract}  Let $K=\Q(\theta)$ be a number field generated  by a complex  root  $\th$ of a monic irreducible trinomial  $F(x) = x^n+ax^{m}+b \in \Z[x]$.    In this paper, we deal with the problem of the non-monogenity of $K$.  More precisely, we provide some explicit conditions on $a$,  $b$, $n$, and $m$ for which $K$ is not monogenic. As  application, we show that there are infinite families of non-monogenic number fields defined by trinomials of degree $n=2^r\cdot3^k$ with $r$ and $k$ are positive integers.  We also give  two infinite families of non-monogenic number fields defined by trinomials of degree $6$. Finally, we illustrate our results by giving  some examples.  
	       
\end{abstract}
\maketitle
%\tableofcontents

\section{Introduction}

Let $K=\Q(\th)$ be a number field  generated by a complex  root   $\th$ of a monic irreducible  polynomial $F(x)$ of degree $n$ over  $\Q$ and   $\Z_K$   its  ring of  integers. The field $K$ is called monogenic if there exists  $\eta \in \Z_K$ such that $\Z_K= \Z[\eta]$, that is $(1, \eta, \ldots, \eta^{n-1})$ is an integral basis (called a power integral basis) in $K$. The polynomial $F(x)$ is called monogenic if $\Z_K = \Z[\th]$, or equivalently if  $(1, \th, \ldots, \th^{n-1})$ is an integral basis  in $K$.  It is important to note that the monogenity of  the polynomial $F(x)$ implies the monogenity of the field $K$. But, the non-monogenity of $F(x)$ does not imply the non-monogenity of $K$. For an example,   refer to \cite[Theorem 2.1]{BFT1} when we gave a family of monic irreducible polynomials of type $x^{p^r}+ax+b$ which are not monogenic, but their roots generate monogenic number fileds.   There are extensive  computational results   concerning  the problem of   monogenity of number fields and constructing power integral bases.   S. Ahmad, T. Nakahara and S. M. Husnine  proved in \cite{ANHN6}  that if $m \equiv 1 \md 4$ and $m \not \equiv  \pm 1 \md 9 $ is a square-free rational integer, then the sextic pure field $K = \Q(\sqrt[6]{m})$  cannot be monogenic.  In \cite{GP}, Ga\'{a}l,  Peth\H{o}, and Pohst studied the monogenity of certain quartic number fields. In\cite{Gacta1}, Ga\'{a}l and Gy\H{o}ry described an algorithm to solve  index form equations in quintic fields  and they computed all generators of power integral bases in some totally real quintic fields with Galois group $S_5$. In  \cite{Gacta2}, Bilu, Ga\'{a}l, and Gy\H{o}ry   studied  the monogenity of  some  totally real sextic fields with Galois group $S_6$.   In \cite{GR17}, Ga\'{a}l and Remete  obtained new deep results on   monogenity of pure number fields $K= \Q(\sqrt[n]{m})$ for $3 \le n \le 9$ and any  square-free rational integer  $m \neq  \pm 1$. They also  showed in \cite{GRN} that if $m \equiv  2\, \mbox{or} \,3 \md 4$ is a square-free rational integer, then the octic field $K= \Q(i, \sqrt[4]{m} )$ is not monogenic.  In \cite{PP}, Peth\H{o} and Pohst studied the indices in some multiquadratic number fields. In \cite{Smtacta},  Smith studied the monogenity of radical extensions and   gave  sufficient conditions for a  Kummer extension $\Q(\xi_n, \sqrt[n]{\a})$  to be not monogenic.    In \cite{BF, BFC}, Ben Yakkou et al. considered the problem of monogenity  in certain pure number fields  with large degrees.   Let 
 \begin{eqnarray}\label{i(K)}
 	i(K) = \gcd \ \{ ( \Z_K : \Z [\eta]) \, |\,\eta \in \Z_K \,and \, K= \Q(\eta)  \}
 \end{eqnarray} be the index  of the field  $K$. In \cite{DS}, Davis and Spearman calculated the index of the quartic field defined by $x^4+ax+b$. El Fadil gave in \cite{Fcom} necessary and sufficient conditions on $a$ and $b$ so that a rational prime integer $p$ is a common index divisor of number fields defined by $x^5+ax^2+b$.   Jakhar and Kumar in \cite{JK} gave infinite families of  non-monogenic number fields defined by $x^6+ax+b$. In \cite{Ga21}, Ga\'al studied the multi-monogenity of sextic number fields defined by trinomials of type $x^6+ax^3+b$.   For the same number fields studied by  Ga\'{a}l, El Fadil gave a complete characterization of the prime  divisors of the index of these number fields.
 Also, in \cite{B}, Ben Yakkou studied the monogenity of certain number fields defined by $x^8+ax+b$.
The purpose of this paper is to  study the problem of the non-monogenity  of certain number  fields  $K= \Q(\th)$ generated by a complex  root  $\th $ of a monic irreducible   trinomial of type  $F(x)= x^n +ax^m+b$  when $ \Z_K \neq \Z[\th]$.  
Recall that the problem of integral clossedness of $\Z_K$ has  been  studied  in \cite{JKS} by Jakhar et al. Their results are refined in \cite{Smt} by Ibara et al. Also, in  \cite{JW, JP}, Jones et al. identified new infinite families of monogenic trinomilas. More precisely, they gave necessary and sufficient conditions involving $a$, $b$, $n$, and $m$ for $\Z_K$ to be equal $\Z[\th]$.  Also, the non-monogenity of $K$ in the special  case $m=1$ has been previously studied in \cite{BFT1} by Ben Yakkou and El Fadil.  It is important to note  that the fundamental method which allows to test whether a number field is monogenic or not is to solve the index form equation which is very complicated for higher number field degrees (see for example \cite{G19, Ga21, Gacta1, Gacta2, GR17}). For this reason, we have   based our method  on Newton polygon techniques applied on  prime ideal factorization which is an efficient tool to investigate the monogenity of fields defined by trinomials.
\section{Main Results}
 In the remainder of this section, $K=\Q(\th)$ is a number field generated  by a complex  root  $\th$ of a monic irreducible trinomial of  type  $F(x) = x^n + ax^m +b \in \Z[x]$. 
 Let $p$ be  a rational prime integer. Throughout this paper, $\F_p$ denotes the finite field with $p$ elements. For $t \in \Z$, $\n_p(t)$ stands for the $p$-adic valuation of $t$  and let $t_p=\frac{t}{p^{\nu_p(t)}}$. For two positive rational integers $d$ and $s$,  we shall denote by $N_p(d)$ the number of  monic irreducible polynomials of degree $d$ in $\F_p[x]$, $N_p(d,s,t)$ the number of monic irreducible factors of degree $d$ of the polynomial $x^s+\overline{t}$ in $\F_p[x]$, and $N_p(d,s,t)[m,c]$ the number of monic irreducible factors of degree $d$ of  $x^s+\overline{t}$ in $\F_p[x]$ which do not divide  $x^m+\ol{c}$. 
 It is known from \cite{Greenfield} that  the discriminant of the trinomial $F(x) = x^n +ax^m +b $ is 
 \begin{eqnarray}\label{disctrinom}
 	\Delta(F)= (-1)^{\frac{n(n-1)}{2}} b^{m-1} (n^{n_1} b^{n_1-m_1} - (-1)^{m_1} m^{m_1}(m-n)^{n_1 - m_1}a^{n_1})^{d_0},
 \end{eqnarray}
 where $d_0 = \gcd(n , m)$, $n_1 = \frac{n}{d_0} $, and $m_1= \frac{m}{d_0}$.
 It follows by (\ref{indexdiscrininant}) and (\ref{i(K)}) that  if a rational  prime integer  $p$ divides $i(K)$, then  $p^2$ divides $ \Delta(F)$. Also, by Zylinski's condition, if $p$ divides $i(K)$, then $p < n$ (see \cite{Zylinski}). Not that these two conditions on $p$ have been taken into consideration  to find  suitable hypothesis of  our results. Also, for the simplicity of calculation, we use the fact  that for any  rational  prime integer $p$ and any rational integer $b$, $\n_p(b+(-b)^{p^r})= \n_p(b^{p-1}-1)$ for every positive integer $r$ (see \cite{BFC}).

   In what follows, we give some  sufficient conditions  on $a$, $b$, $n$, and $m$ for which such fields are not monogenic.  Note that in this direction, our results   with the  significant results  given in \cite{JKS,  JW, JP} give a deep investigation on the monogenity of such number fields.
\begin{theorem}\label{dn1}
	Let  $p$ be an odd rational prime integer such that $p \mid a$,  $p \nmid b$, and $p \mid n$. Set $n = s\cdot p^r$ with $p \nmid s$. Let $\mu = \n_p(a)$ and  $\nu =\n_p(b^{p-1}-1)$. If for some positive integer $d$, one of the following conditions holds:
\begin{enumerate} 
\item $\mu < \min\{ \nu, r+1\}$ and $N_p(d)<\mu N_p(d,s,b)$, 
\item  $\nu < \min\{ \mu, r+1\}$ and $N_p(d)<\nu N_p(d,s,b)$, 
\item  $\mu = \n \le r  $ and $N_p(d)< \mu N_p(d,s,b)[m, \frac{b+(-b)^{p^r}}{a}]$, 
\item  $ r+1  \le \min\{ \nu, \mu\}$ and $N_p(d)<(r+1) N_p(d,s,b)$, 

\end{enumerate}	
 	then $K$ is not monogenic.
\end{theorem}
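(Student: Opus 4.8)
The plan is to show that the prime $p$ is a common index divisor of $K$, which immediately forces $K$ to be non-monogenic. I shall use the classical criterion (Hensel--Dedekind): if for some $d$ the number of prime ideals of $\Z_K$ lying above $p$ with residue degree $d$ is strictly larger than $N_p(d)$, then $p\mid i(K)$. Indeed, were $\Z_K=\Z[\eta]$ for some generator $\eta$, the factorization of the minimal polynomial of $\eta$ modulo $p$ would have to provide that many \emph{distinct} monic irreducibles of degree $d$ in $\fp[x]$, which is impossible; hence no such $\eta$ exists. So in each case it suffices to produce more than $N_p(d)$ primes of residue degree $d$.

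First I reduce $F$ modulo $p$. Since $p\mid a$, $p\nmid b$ and $\overline b^{\,p^r}=\overline b$ in $\fp$, we get $F\equiv x^n+\overline b=(x^s+\overline b)^{p^r}\pmod p$. Because $p\nmid s$ and $p\nmid b$, the factor $x^s+\overline b$ is separable; writing $x^s+\overline b=\prod_i\phi_i$ with the $\phi_i$ distinct monic irreducibles, we have $F\equiv\prod_i\phi_i^{p^r}\pmod p$, and by Hensel's lemma $x^s+b=\prod_i\Phi_i$ over $\Z_p$ with $\Phi_i\equiv\phi_i\pmod p$. Fix a factor $\phi_i$ of degree $d$. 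The primes above $p$ attached to $\phi_i$, together with their residue degrees, are read off from the $\phi_i$-Newton polygon of $F$ via the theorem of Ore.

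To build that polygon I expand $x^n=\big((x^s+b)-b\big)^{p^r}=\sum_k\binom{p^r}{k}(-b)^{p^r-k}(x^s+b)^k$ and use $(x^s+b)^k=\Phi_i^k\,\big(\prod_{j\ne i}\Phi_j\big)^k$ together with $\nu_p\binom{p^r}{k}=r-\nu_p(k)$; this places vertices at $(p^j,r-j)$ for $0\le j\le r$, the last being $(p^r,0)$, the multiplicity of $\phi_i$ in $\overline F$. The analogous expansion of $ax^m$ contributes only points of height $\ge\mu$, and the base coefficient of $F$ has value $v_0=\nu_p\big((-b)^{p^r}+b+a\,(x^m\bmod\Phi_i)\big)$; by the cited identity $\nu_p((-b)^{p^r}+b)=\nu$. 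In case (1) one has $v_0=\mu$, in case (2) $v_0=\nu$, and in case (4) $v_0\ge r+1$ because $\min(\mu,\nu)\ge r+1$. A direct convexity computation then shows the principal polygon runs from $(0,v_0)$ through the vertices $(p^j,r-j)$ down to $(p^r,0)$ in exactly $\mu$, respectively $\nu$, respectively $r+1$ sides, each of degree one. A degree-one side is automatically regular, so Ore's theorem gives a single prime per side, of residue degree $d\cdot 1=d$; hence each $\phi_i$ of degree $d$ yields $\mu$ (resp. $\nu$, resp. $r+1$) primes of residue degree $d$, for a total of $\mu\,N_p(d,s,b)$ (resp. $\nu\,N_p(d,s,b)$, $(r+1)N_p(d,s,b)$).

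Case (3), with $\mu=\nu\le r$, is the delicate one and the main obstacle, since now the two base contributions share the valuation $\mu$ and may cancel. Writing $(-b)^{p^r}+b+a\,(x^m\bmod\Phi_i)=a\big((x^m\bmod\Phi_i)+c\big)$ with $c=\frac{b+(-b)^{p^r}}{a}$ a $p$-adic unit, one sees $v_0=\mu$ exactly when $\nu_p((x^m\bmod\Phi_i)+c)=0$, that is, exactly when $\phi_i\nmid x^m+\overline c$ in $\fp[x]$; for such $\phi_i$ the polygon coincides with that of case (1) and produces $\mu$ primes of residue degree $d$, whereas for $\phi_i\mid x^m+\overline c$ the base vertex rises and the clean count fails. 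Restricting to the former gives at least $\mu\,N_p(d,s,b)[m,c]$ primes of residue degree $d$, the quantity appearing in (3). In every case the stated hypothesis asserts exactly that this number exceeds $N_p(d)$, so $p$ is a common index divisor and $K$ is not monogenic. The real work lies in the uniform convexity bookkeeping fixing the number and degrees of the polygon's sides across the four regimes, and in the cancellation analysis singling out the correct factors $\phi_i$ in case (3).
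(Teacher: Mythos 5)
Your strategy coincides with the paper's: produce, for the given $d$, more prime ideals above $p$ of residue degree $d$ than $N_p(d)$, so that $p\mid i(K)$ by the common-index-divisor criterion (Lemma~\ref{comindex}), the primes being read off from $\phi_i$-Newton polygons all of whose sides have degree one, via Ore's theorem (Theorem~\ref{ore}). Your implementation differs only in the bookkeeping: you develop $F$ along the exact Hensel factors $\Phi_i$ of $x^s+b$ over $\Z_p$, while the paper lifts each $\phi_i$ to $\phi\in\Z[x]$ with $x^s+b=\phi U+pT$ and $\overline{\phi}\nmid\overline{U T}$ (Lemma~\ref{lemtech}) and controls the resulting term $p^{r+1}H(x)$. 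Cases (1), (2) and (3), including your identification of the factors $\phi_i\nmid x^m+\overline{c}$ in case (3), are correct and in substance identical to the paper's.

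Case (4), however, has a genuine gap. Your claim that the polygon ``runs from $(0,v_0)$ through the vertices $(p^j,r-j)$'' requires the development to be admissible at abscissa $0$, i.e.\ that $\phi_i$ does not divide the reduction of $C_0/p^{v_0}$, where $C_0=ax^m+b+(-b)^{p^r}$. This is automatic when $\mu\neq\nu$ (the residue is $\overline{a_p}\,x^m$ or a nonzero constant), but case (4) contains the sub-case $\mu=\nu\ge r+1$, where the residue is $\overline{a_p}\,(x^m+\overline{c})$ and can perfectly well be divisible by $\phi_i$ --- the same cancellation you rightly flagged in case (3) but overlooked here. Then the left vertex is not pinned at $(0,v_0)$; with your exact $p$-adic factors it may even happen that $\Phi_i\mid F$ in $\Z_p[x]$, so that the abscissa-$0$ adic coefficient vanishes outright. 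The count of $r+1$ primes of residue degree $d$ per factor does survive --- the true constant coefficient $F\bmod\Phi_i=C_0\bmod\Phi_i$ has $\nu_p\ge r+1$ whenever it is nonzero, and the leftmost side, of horizontal length $1$, has degree one whatever its height --- but this must be argued, not asserted. The paper closes exactly this hole in its development \eqref{devn12}: it replaces $A_0(x)$ by its $\phi$-adic development $\sum_j a_j(x)\phi(x)^j$, all of whose coefficients have valuation $\ge r+1$, so that the new constant coefficient $a_0(x)$ has degree $<\deg\phi$ and admissibility at abscissa $0$ is automatic; moreover, with an integer lift $\phi$, the irreducibility of $F$ over $\Q$ gives $a_0(x)\neq 0$ for free, which is precisely the technical advantage of Lemma~\ref{lemtech} over bare Hensel factors.
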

\begin{corollary}$($\cite[Theorem 2.2]{Fcom}$)$\\
Let $p$ be an odd rational prime integer and  $F(x)= x^{p^r}+ax^m+b$. If $a \equiv 0 \md {p^{p+1}}$, $b^{p-1} \equiv 1 \md{p^{p+1}}$, and $r \ge p$, then $K$ is not monogenic. In particular, for $p=3$, if $a \equiv 0 \md{81}$, $b \equiv \pm 1 \md{81}$, and $r \ge 3$, then $K$ is not monogenic.
\end{corollary}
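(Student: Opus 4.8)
The plan is to derive the corollary from Theorem~\ref{dn1} by specialising to $n=p^r$ and testing the residue degree $d=1$.

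First I would record what the hypotheses give in the notation of Theorem~\ref{dn1}. Since $n=p^r$, we have $s=1$. The congruences $a\equiv 0 \md{p^{p+1}}$ and $b^{p-1}\equiv 1\md{p^{p+1}}$ say exactly that $\m=\n_p(a)\ge p+1$ and $\n=\n_p(b^{p-1}-1)\ge p+1$, while the last hypothesis is $r\ge p$. Because $s=1$, the polynomial $x^{s}+\ol b=x+\ol b$ is linear, so its only monic irreducible factor has degree $1$; hence $N_p(1,1,b)=1$ and $N_p(d,1,b)=0$ for every $d\ge 2$. Consequently $d=1$ is the only degree that can make the right-hand sides of the four inequalities positive, and I would test the theorem with $d=1$, using $N_p(1)=p$.

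Next I would observe that the four hypotheses of Theorem~\ref{dn1} exhaust all configurations of the triple $(\m,\n,r+1)$: if $r+1\le\min\{\m,\n\}$ we are in case (4); otherwise $\min\{\m,\n\}\le r$, and then case (1), (2) or (3) occurs according as $\m<\n$, $\n<\m$ or $\m=\n$. In cases (1), (2) and (4) the multiplier in front of $N_p(1,1,b)=1$ is $\m$, $\n$ or $r+1$ respectively, and each of these is at least $p+1>p=N_p(1)$; thus the required strict inequality $N_p(1)<(\text{multiplier})\cdot N_p(1,1,b)$ holds automatically, and Theorem~\ref{dn1} yields that $K$ is not monogenic.

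The remaining and most delicate case, which I expect to be the main obstacle, is (3), where $\m=\n\le r$. Here the count is $\m\, N_p(1,1,b)[m,c]$ with $c=\frac{b+(-b)^{p^r}}{a}$, and $N_p(1,1,b)[m,c]$ equals $1$ precisely when the degree-one factor $x+\ol b$ does not divide $x^{m}+\ol c$, i.e. when $(-\ol b)^{m}\neq-\ol c$ in $\F_p$. To evaluate $\ol c$ I would use that $p$ is odd, so $(-b)^{p^r}=-b^{p^r}$ and $c=\frac{b-b^{p^r}}{a}$; since $\m=\n$, both numerator and denominator have $p$-adic valuation $\n$ (here I invoke the identity $\n_p(b+(-b)^{p^r})=\n_p(b^{p-1}-1)$ recalled in the text), so $c$ is a $p$-adic unit whose residue $\ol c$ is computed from the unit parts of $a$ and of $b-b^{p^r}$. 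The real work is this residue computation together with the verification that the resulting $\ol c$ avoids the value $-(-\ol b)^{m}$; once $N_p(1,1,b)[m,c]=1$ is secured, the inequality reads $p<\m$, which holds since $\m\ge p+1$, and Theorem~\ref{dn1}(3) again gives non-monogenity. Finally, the stated particular case $p=3$ is an immediate specialisation: $81=3^{4}=3^{p+1}$, and $b\equiv\pm1\md{81}$ forces $b^{2}=b^{p-1}\equiv1\md{81}$, so the three hypotheses are met and the general statement applies verbatim.
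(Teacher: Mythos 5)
Your reduction of the corollary to Theorem \ref{dn1} (take $s=1$, test $d=1$, use $N_p(1)=p$ and $N_p(1,1,b)=1$) is the natural route, your case analysis of the triple $(\mu,\nu,r+1)$ is genuinely exhaustive, and configurations (1), (2), (4) are handled completely and correctly, as is the specialisation to $p=3$. The problem is case (3), which you explicitly leave open: the verification you defer as ``the real work'' is not a postponed computation but a claim that is false in general, so your plan cannot be completed as written. Unwinding the residue of $c=\frac{b+(-b)^{p^r}}{a}$ when $\mu=\nu\le r$ gives $\ol c=-\,\ol b\;\ol{(b^{p-1}-1)_p}\;(\ol{a_p})^{-1}$ (the geometric sum $\sum_j b^{(p-1)j}$ is $\equiv 1 \md p$), so $x+\ol b$ divides $x^m+\ol c$ exactly when $\ol{(b^{p-1}-1)_p}=(-1)^m\,\ol b^{\,m-1}\,\ol{a_p}$ in $\F_p$. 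This is a single congruence on the \emph{unit parts} of $a$ and $b^{p-1}-1$, quantities on which the hypotheses of the corollary (which constrain only valuations) impose no condition whatsoever.

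A concrete irreducible witness: $F(x)=x^{81}+324x+82$, i.e. $p=3$, $m=1$, $r=4$, $a=4\cdot 3^4$, $b=82$. This polynomial is Eisenstein at $2$, hence irreducible; moreover $a\equiv 0 \md{81}$, $b^{2}\equiv 1 \md{81}$ (indeed $82^2-1=81\cdot 83$) and $r\ge 3$, so the corollary applies. Here $\mu=\nu=4\le r$, so you are in case (3); but $\ol{(b^{2}-1)_3}=\ol{83}=\ol{2}=-\ol{a_3}$, so $x+\ol b=x+1$ divides $x^m+\ol c=x+1$, whence $N_3(1,1,b)[m,c]=0$ and \emph{none} of the four hypotheses of Theorem \ref{dn1} holds: the theorem is simply silent for this field. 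The conclusion of the corollary is still true, but to obtain it one must go back into the proof of Theorem \ref{dn1} rather than its statement: when $\ol{\phi}$ divides $\ol{a_px^m+(b+(-b)^{p^r})_p}$, the development (\ref{devn1}) is inadmissible, and replacing $A_0(x)$ by its $\phi$-adic development (as in case (4) of the proof) yields a polygon starting strictly above height $\mu$ with additional sides; for the witness above (using Lemma \ref{binomial}) the vertices are $(0,5),(1,4),(3,3),(9,2),(27,1),(81,0)$, giving five prime ideals of residue degree $1$ above $3$, so $3\mid i(K)$ by Lemma \ref{comindex}. This inadmissible subcase is precisely what your proposal is missing, and it cannot be supplied by the residue computation you sketch, since that computation can (as above) return exactly the forbidden value.
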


\begin{remark}
	Theorem \ref{dn1} also implies \cite[Theorem 2.4]{BF} and \cite[Theorems 2.2 and  2.5]{BFT1}, where  the special cases  $a=0$ and  $m=1$ are respectively previously studied.
	
\end{remark}
\begin{corollary}\label{cordn1}
For $F(x)=x^{2^k \cdot 3^r}+ax^m+b$. If one of the following conditions holds:
\begin{enumerate}
\item $k \ge 1, r \ge 2, a \equiv 9, 18 \md {27} \mbox{and} \, b \equiv -1 \md {27}$, 
\item $k\ge 1$, $r \ge 3$, $a \equiv 27, 54 \md{81}$ and $b \equiv -1 \md {81}$, 
\item $k \ge 1, r \ge 2, a \equiv 0 \md {27} \mbox{and} \, b \equiv 8, 17 \md {27}$, 
\item $k \ge 1, r \ge 3, a \equiv 0 \md {81} \mbox{and} \, b \equiv 26, 53 \md {81}$, 
\item $k \ge 1, r = 1, a \equiv 0 \md 9 \mbox{and} \, b \equiv  -1  \md {9}$, 
\item $k \ge 1, r = 2, a \equiv 0 \md {27} \mbox{and} \, b \equiv  -1  \md {27}$, 
\item $k = 1, r \ge 7, a \equiv 3^7, 2 \cdot 3^7 \md {3^8} \mbox{and} \, b \equiv  1  \md {3^8}$, 
\item $k = 1, r \ge 7, a \equiv 0 \md {3^8} \mbox{and} \, b \equiv  1+3^7, 1+2\cdot3^7  \md {3^8}$, 
\item $k = 1, r =6, a \equiv 0 \md {3^7} \mbox{and} \, b \equiv  1 \md {3^7}$, 
\item $k = 2, r \ge 4, a \equiv 81, 162 \md {243} \mbox{and} \, b \equiv 1  \md {243}$, 
\item $k = 2, r \ge 4, a \equiv 0 \md {243} \mbox{and} \, b \equiv 82, 163  \md {3^5}$,
\item $k = 2, r =3, a \equiv 0 \md {81} \mbox{and} \, b \equiv  1 \md {81}$, 

\end{enumerate}
then $K$ is not monogenic
\end{corollary}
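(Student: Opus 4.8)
The plan is to apply Theorem~\ref{dn1} with the odd prime $p=3$. Since $n=2^k\cdot 3^r$ with $r\ge 1$, the decomposition $n=s\cdot p^r$ of that theorem forces $s=2^k$ and $p^r=3^r$, and in every item the hypotheses $3\mid a$, $3\nmid b$, $3\mid n$ are immediate from the stated congruences (the residue $\ol b\in\F_3$ being $1$ or $2$, hence nonzero). Here $p-1=2$, so the relevant invariants are $\mu=\n_3(a)$, $\nu=\n_3(b^2-1)$, together with the counts $N_3(d)$ and $N_3(d,2^k,b)$. For each of the twelve items I would read off $\mu$ and $\nu$, decide which of the four numerical conditions of Theorem~\ref{dn1} applies by comparing $\mu$, $\nu$ and $r+1$, and then exhibit a degree $d$ for which the required strict inequality holds.

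First I would record the elementary counts over $\F_3$: $N_3(1)=3$ and $N_3(2)=(3^2-3)/2=3$. The key computation is the factorization type of $x^{2^k}+\ol b$ over $\F_3$, which splits the twelve items according to $\ol b$. If $b\equiv -1\pmod 3$ (items (1)--(6)), then $\ol b=2$ and $x^{2^k}-1$ has exactly the two simple linear factors $x-1$ and $x+1$ for every $k\ge 1$, so $N_3(1,2^k,b)=2$. If $b\equiv 1\pmod 3$ (items (7)--(12)), then $\ol b=1$ and $x^{2^k}+1=\Phi_{2^{k+1}}(x)$; its irreducible factors over $\F_3$ all have degree equal to the order of $3$ modulo $2^{k+1}$, which is $2$ for both $k=1$ and $k=2$. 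Hence $N_3(2,2,b)=1$ (the polynomial $x^2+1$ is irreducible) and $N_3(2,4,b)=2$ (two quadratic factors of $x^4+1$).

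With these counts the rest is a matching exercise. The valuations are read off directly: $a\equiv 3^j,2\cdot 3^j\pmod{3^{j+1}}$ gives $\mu=j$ exactly, whereas $a\equiv 0\pmod{3^\ell}$ gives only the bound $\mu\ge\ell$; likewise $b\equiv -1\pmod{3^\ell}$ gives $\nu=\n_3(b+1)\ge\ell$, while $b\equiv 1\pmod{3^j}$ with $b\not\equiv 1\pmod{3^{j+1}}$ gives $\nu=\n_3(b-1)=j$. Comparing these selects the condition: items (1),(2),(7),(10) satisfy $\mu<\min\{\nu,r+1\}$ and use condition~(1); items (3),(4),(8),(11) satisfy $\nu<\min\{\mu,r+1\}$ and use condition~(2); items (5),(6),(9),(12) satisfy $r+1\le\min\{\mu,\nu\}$ and use condition~(4) (no item needs the diagonal condition~(3)). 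In the six cases with $\ol b=2$ I would take $d=1$, where the inequality reads $3<2\,c$ with coefficient $c\in\{\mu,\nu,r+1\}$ always at least $2$, so $3<4\le 2c$. In the six cases with $\ol b=1$ I would take $d=2$, where it reads $3<c\cdot N_3(2,2^k,b)$ with $c\ge 7$ and $N_3(2,2,b)=1$ when $k=1$, and $c\ge 4$ and $N_3(2,4,b)=2$ when $k=2$; both give $3<c\cdot N_3(2,2^k,b)$. In each item the applicable condition of Theorem~\ref{dn1} is thereby met, and $K$ is not monogenic.

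The only genuinely computational ingredient, and the natural place for the argument to go wrong, is the determination of $N_3(d,2^k,b)$, i.e.\ the splitting type of $x^{2^k}+\ol b$ over $\F_3$; this rests on recognising $x^{2^k}+1$ as a cyclotomic polynomial and on computing the order of $3$ modulo $2^{k+1}$, and it is precisely this order being $2$ that forces the restriction $k\in\{1,2\}$ in items (7)--(12). Beyond this, the proof is careful bookkeeping: extracting the exact values (or sufficient lower bounds) of $\mu$ and $\nu$ from the congruences and verifying the resulting elementary inequalities, so I anticipate no serious obstacle.
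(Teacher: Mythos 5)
Your proposal is correct and follows essentially the same route as the paper: the paper's (very terse) proof likewise establishes the key counts $N_3(1,2^k,-1)=2$, $N_3(2,2,1)=1$, and $N_3(2,4,1)=2$ over $\F_3$ and then invokes Theorem \ref{dn1} directly. Your item-by-item matching of $\mu$, $\nu$, and $r+1$ to conditions (1), (2), and (4), with $d=1$ when $\ol{b}=\ol{-1}$ and $d=2$ when $\ol{b}=\ol{1}$, is exactly the bookkeeping the paper leaves implicit, and it checks out in all twelve cases.
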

 Remark that if we fix $k=1$ in Corollary \ref{cordn1}(5), then we conclude that if $a$ and $b+1$ are divisible by $9$, then $K$ is not monogenic. The following  theorem gives   two  infinite families  of non-monogenic  sextic number fields defined by trinomials with partial informations about their index.
\newpage
\begin{theorem}\label{d6}
Let $K=\Q(\th)$ be a sextic	 number field generated by a complex root 	 of a monic irreducible trinomial $x^6+ax^m+b$. Then the following hold:
\begin{enumerate}
\item  If $a\equiv 0 \md 9$ and $b \equiv -1 \md 9$, then $i(K)\equiv 3\, \mbox{or} \,6 \md 9$.
\item   If $a\equiv 0 \md 8$ and $b \equiv -1 \md 8$, then $i(K) \equiv 4 \md 8$.
\end{enumerate}

 In particular, if one of these conditions holds, then $K$ is not monogenic.
\end{theorem}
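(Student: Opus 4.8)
The plan is to pin down, in each of the two regimes, the exact power of the relevant small prime dividing the field index $i(K)$. Since $K$ is monogenic exactly when $i(K)=1$, any prime dividing $i(K)$ already forces non-monogenity; the sharper assertions $i(K)\equiv 3$ or $6\md 9$ and $i(K)\equiv 4\md 8$ amount to showing $\n_3(i(K))=1$ in case (1) and $\n_2(i(K))=2$ in case (2). Two ingredients drive the computation. First, the factorization of $p$ into prime ideals of $\Z_K$, which I read off from the $\phi$-Newton polygons of $F$ exactly as in the proof of Theorem \ref{dn1}. Second, the classical index formula of Ore and Engstrom: for a fixed decomposition type of $p$, the value $\n_p(i(K))=\min_{\eta}\n_p([\Z_K:\Z[\eta]])$ over all generators $\eta$ is the minimal number of forced coincidences that arise when one tries to separate the primes above $p$ using the $N_p(d)$ available monic irreducibles of each degree $d$ over $\F_p$.

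In case (1) I first reduce modulo $3$: from $a\equiv 0$ and $b\equiv -1\md 3$ one gets $\ol F=x^{6}-1=(x-1)^{3}(x+1)^{3}$ in $\F_3[x]$, so $3$ is governed by $\phi_1=x-1$ and $\phi_2=x+1$, each of multiplicity $3$. Expanding $F$ $\phi_i$-adically and using $\n_3(a)\ge 2$ together with $\n_3(1\pm a+b)\ge 2$ (both immediate from $a\equiv 0,\ b\equiv -1\md 9$), the $\phi_i$-Newton polygon consists of a side of length $1$ with integer slope and a side of slope $-1/2$ of length $2$; by Ore's theorem each $\phi_i$ contributes one unramified prime with $(e,f)=(1,1)$ and one prime with $(e,f)=(2,1)$. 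Thus $3$ has exactly four prime ideals of residue degree $1$, and since $N_3(1)=3<4$ it is a common index divisor — this is case (4) of Theorem \ref{dn1} with $s=2,\ r=1$, where $(r+1)N_3(1,2,b)=2\cdot 2=4$. As four distinct primes cannot receive distinct residues among the three elements of $\F_3$, the optimal packing $(2,1,1)$ forces exactly one coincidence, whence $\n_3(i(K))=1$ and $i(K)\equiv 3$ or $6\md 9$.

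Case (2) concerns $p=2$, which is excluded from Theorem \ref{dn1} and must be handled on its own. Here $a\equiv 0$ and $b\equiv 1\md 2$ give $\ol F=x^{6}+1=(x+1)^{2}(x^{2}+x+1)^{2}$ in $\F_2[x]$, so $2$ is governed by $\phi_1=x+1$ and $\phi_2=x^{2}+x+1$, each of multiplicity $2$. Using $\n_2(a)\ge 3$ and $\n_2(1+b)\ge 3$, the $\phi_1$-Newton polygon has two length-$1$ sides of integer slope, yielding two unramified primes of residue degree $1$; since $N_2(1)=2$, these can be separated and contribute nothing. The decisive object is $\phi_2$: working over the unramified quadratic extension $\Q_2(\omega)$ with $\omega^{2}+\omega+1=0$, I would show that its Newton polygon has only integer slopes and yields two primes with $(e,f)=(1,2)$, equivalently that the attached residual polynomial has two distinct roots in $\F_4$. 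Because $N_2(2)=1<2$, these two degree-$2$ primes cannot be separated and both reduce to $x^{2}+x+1$; any generator then has two pairs of conjugate residues coinciding modulo $2$, contributing $2$ to the $2$-adic valuation of its discriminant while $2$ remains unramified in $K$. Hence $\n_2(i(K))=2$, i.e. $i(K)\equiv 4\md 8$.

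I expect the main obstacle to be the residual step of case (2): one must rule out that the $\phi_2$-data are ramified (a single prime of residue degree $2$) or that the residual polynomial is irreducible over $\F_4$ (a single prime of residue degree $4$), either of which would kill the excess $g_2>N_2(2)$ and the entire conclusion. This hinges on an explicit residual computation over $\F_4$, and if the first-order polygon proves non-regular one is pushed into a second-order Montes analysis; the hypotheses $a\equiv 0\md 8$ and $b\equiv -1\md 8$ should be exactly what guarantees the favourable regular, split case uniformly in the exponent $m$. A secondary point common to both parts is to check that the combinatorial lower bound coming from the packing of residues is actually attained by some $\eta\in\Z_K$, so that the stated valuation is the true value of $\n_p(i(K))$ rather than a mere lower bound; this is supplied by the approximation argument underlying the Ore--Engstrom index formula.
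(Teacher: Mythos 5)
Your part (1) is essentially correct and is, in substance, the paper's own argument: the hypotheses place you in case (4) of Theorem \ref{dn1} with $p=3$, $s=2$, $r=1$; each of the two liftings of the linear factors of $x^2-1$ produces one prime with $(e,f)=(1,1)$ and one with $(e,f)=(2,1)$, so $3\Z_K=\pF_1\pF_2\pF_3^2\pF_4^2$ with all residue degrees $1$ and $P_1=4>3=N_3(1)$. To get the exact value $\n_3(i(K))=1$ the paper simply cites Engstrom's table for fields of degree $\le 7$; your ``optimal packing'' count is the combinatorics encoded in that table entry, and since you yourself appeal to the Ore--Engstrom result for attainability, the two treatments coincide here.

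Part (2), however, has a genuine gap at exactly the decisive step, and you flag it yourself. The whole conclusion rests on showing that $\ph_2=x^2+x+1$ contributes two distinct \emph{unramified} primes of residue degree $2$ (so that $P_2=2>1=N_2(2)$ and the forced coincidence contributes $2$). You do not prove this: you say you ``would show'' the $\ph_2$-polygon has integer slopes and a split residual polynomial over $\F_4$, you concede that a ramified or inert outcome would destroy the statement, and you leave open whether a second-order Montes analysis is required. As written, part (2) is a conditional assertion, not a proof. The paper closes this gap with one elementary identity, uniform in $m$: since $x^3=\ph_1(x)\ph_2(x)+1$, one has $F(x)=(\ph_1(x)\ph_2(x))^2+2\ph_1(x)\ph_2(x)+(ax^m+1+b)$, which is an admissible $\ph_i$-development for both $i=1,2$ whenever $\n_2(a)\neq\n_2(1+b)$ (when $\n_2(a)=\n_2(1+b)$ one restores admissibility by dividing $a_2x^m+(1+b)_2$ by $\ph_2$ and absorbing the quotient into the degree-one coefficient). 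The cross term $2\ph_1\ph_2$ pins the point $(1,1)$ on the polygon, while $\n_2(ax^m+1+b)\ge 3$ lifts the initial vertex, so $N^{+}_{\ph_2}(F)$ consists of two sides of degree $1$ with slopes $\le -2$ and $-1$; degree-one sides have linear, hence automatically irreducible, residual polynomials, so $F$ is $2$-regular and Ore's theorem yields two primes with $(e,f)=(1,2)$ --- no residual computation over $\F_4$, no base change to $\Q_2(\omega)$, and no higher-order Newton polygons are needed. That observation, which settles the case you identify as the main obstacle, is the idea missing from your proposal.
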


\begin{remark}
Our  Theorem  \ref{d6} generalize Jakhar's and Kumar's result given in  \cite[Theorem 1.1]{JK} when they studied  the special  case $m=1$.

\end{remark}

\begin{theorem}\label{dn2}
	Let $p$ be an odd rational prime integer such that $p\nmid a$, $p\mid b$, and $p\mid n-m$. Set $n-m = u \cdot p^k$ with  $p \nmid u$. Let $\delta = \n_p(b)$ and  $\k = \n_p(a^{p-1}-1)$. If for some positive integer $d$, one of the following conditions holds:
	\begin{enumerate}
	\item  $\delta < \min \{\k, k+1\}$ and $N_p(d) < \delta N_p(d,u,a)$, 
	\item  $\k < \min \{ \delta, k+1\}$ and $N_p(d) < \k N_p(d,u,a)$, 
	 
	 \item    $\k = \delta \le k  $ and $N_p(d)< \k N_p(d,s,b)[m, \frac{b}{a+(-a)^{p^k}}]$, 
	 \item   $k+1 \le \min \{\k, \delta\}$ and $N_p(d) < (k+1) N_p(d,u,a)$, 
	\end{enumerate}
	then $K$ is not monogenic.

\end{theorem}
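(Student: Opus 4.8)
The plan is to prove the stronger statement that $p$ is a common index divisor of $K$, i.e.\ $p\mid i(K)$. Since by $(\ref{i(K)})$ the integer $i(K)$ divides every index $(\Z_K:\Z[\eta])$, this forces $K$ to be non-monogenic. By the classical criterion of Engstrom on common index divisors, it suffices to exhibit a positive integer $d$ such that the number of prime ideals of $\Z_K$ above $p$ of residue degree $d$ strictly exceeds $N_p(d)$. In each case the hypothesis $N_p(d)<\lambda\,N_p(d,u,a)$, with $\lambda$ equal to $\d$, $\k$, $\k$ and $k+1$ respectively, is exactly an inequality of this type, so everything reduces to the local claim that every monic irreducible factor $g$ of degree $d$ of $x^u+\ol a$ gives rise to exactly $\lambda$ primes of residue degree $d$.

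I would first reduce modulo $p$. As $p\nmid a$ and $p\mid b$, we have $\ol F=x^m(x^{\,n-m}+\ol a)$ in $\fp[x]$; writing $n-m=u\,p^k$ and using $\ol a^{\,p^k}=\ol a$ in $\fp$, the Frobenius identity gives $x^{\,n-m}+\ol a=(x^u+\ol a)^{p^k}$, and since $p\nmid u$ and $\ol a\neq0$ the polynomial $x^u+\ol a$ is separable. Hence $\ol F=x^m\prod_i g_i^{p^k}$, the $g_i$ being the distinct irreducible factors of $x^u+\ol a$. The factor $x^m$ contributes additional primes above $p$, but these can only increase the counts and so never obstruct the argument; I would therefore fix one $g_i$ of degree $d$ and study the $\ph_i$-Newton polygon of $F$ for a monic lift $\ph_i$ of $g_i$. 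Passing to the unramified extension of $\Q_p$ with residue field $\fp[x]/(g_i)$, Hensel's lemma (again using $p\nmid u$, $\ol a\neq0$) supplies an integral $\beta$ with $\beta^u=-a$ and $\ol\beta$ a root of $g_i$, so the polygon can be computed from the expansion of $F$ in powers of $x-\beta$.

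A direct computation gives, for $1\le i\le m$, the coefficient $\beta^{m-i}a\bigl(\binom{m}{i}-a^{p^k-1}\binom{n}{i}\bigr)$ of $(x-\beta)^i$, while the constant term is $F(\beta)=\beta^m\bigl(a+(-a)^{p^k}\bigr)+b$. By the identity recalled before Theorem~\ref{dn1} one has $\n_p\bigl(a+(-a)^{p^k}\bigr)=\n_p(a^{p-1}-1)=\k$, so $\n_p(F(\beta))=\rho:=\min\{\d,\k\}$ whenever $\d\neq\k$. Using $\n_p\bigl(\binom{up^k}{l}\bigr)=k-\n_p(l)$ for $1\le l<p^k$, the expansion $\binom{n}{i}-\binom{m}{i}=\sum_{l\ge1}\binom{up^k}{l}\binom{m}{i-l}$, and $a^{p^k-1}\equiv1\md{p^{\k}}$, one checks that for every $j$ with $k-j<\rho$ the coefficient at $i=p^j$ has valuation exactly $k-j$, and that all remaining points $(i,\n_p(\cdot))$ lie on or above the lower convex hull of $(0,\rho)$ together with the points $(p^j,k-j)$. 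Reading off this hull is elementary: when $\rho\le k$ it has the single long side $(0,\rho)\to(p^{\,k-\rho+1},\rho-1)$ followed by the $\rho-1$ short sides through the successive points $(p^{\,k-\rho+t},\rho-t)$, and when $\rho\ge k+1$ it has the side $(0,\rho)\to(1,k)$ followed by the $k$ sides through the points $(p^j,k-j)$. In both situations each side has horizontal length equal to the denominator of its slope, so every residual polynomial is linear; thus $F$ is $\ph_i$-regular and Ore's theorem produces exactly one prime of residue degree $d$ per side. This yields $\rho$ primes when $\rho\le k$ and $k+1$ primes when $\rho\ge k+1$, which are precisely the counts $\d$, $\k$ and $k+1$ of cases $(1)$, $(2)$ and $(4)$.

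Case $(3)$, where $\d=\k\le k$, is the delicate point, since the two valuation-$\k$ terms in $F(\beta)=\beta^m(a+(-a)^{p^k})+b$ may cancel. Setting $c=\tfrac{b}{a+(-a)^{p^k}}$, which is a $p$-adic unit because $\d=\k$, one finds $\n_p(F(\beta))=\k$ exactly when $\ol\beta^{\,m}\neq-\ol c$, that is, when $g_i\nmid x^m+\ol c$; for those factors the polygon has the $\rho=\k$ shape above and contributes $\k$ primes, while the factors dividing $x^m+\ol c$ must be discarded. This is exactly the meaning of the count $\k\,N_p(d,u,a)[m,c]$. Summing the per-factor contributions over the relevant $g_i$ and invoking Engstrom's criterion then gives $p\mid i(K)$ in each of the four cases, hence the non-monogenity of $K$. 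The main obstacle is the binomial-coefficient bookkeeping—via Kummer's and Lucas' theorems—needed to locate the Newton polygon precisely, together with the exact order-of-vanishing analysis isolating the factors to be excluded in case $(3)$; once the polygon is determined, $\ph_i$-regularity is automatic since all residual polynomials are linear.
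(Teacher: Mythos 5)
Your argument is correct, and it follows the same overall skeleton as the paper's proof---reduce mod $p$ to $\ol{F}=x^m(x^u+\ol a)^{p^k}$, compute a Newton polygon for each irreducible factor $g_i$ of $x^u+\ol a$ of degree $d$, observe that every residual polynomial is linear so that Ore's theorem (Theorem \ref{ore}) counts the primes, and conclude via Lemma \ref{comindex} (which the paper attributes to Dedekind/Hensel; Engstrom enters this paper only through the table used for Theorem \ref{d6})---but the technical heart, the determination of the polygon, is done by a genuinely different method. The paper never leaves $\Z$: it lifts $g_i$ to $\ph$ via Lemma \ref{lemtech}, expands $x^m(\ph V+pR-a)^{p^k}$ binomially into an explicit $\ph$-development, and uses admissibility (Lemma \ref{admiss}) together with Lemma \ref{binomial} to identify $\npp{F}$, exactly as in the proof of Theorem \ref{dn1}. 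You instead base-change to the degree-$d$ unramified extension $L$ of $\Q_p$, Hensel-lift a root $\beta$ of $x^u+a$, and read the polygon off exact Taylor coefficients of $F$ at $\beta$. Your route makes case (3) especially transparent: $F(\beta)=(a+(-a)^{p^k})(\beta^m+c)$ exhibits the discarded factors as precisely the divisors of $x^m+\ol{c}$, matching the count $N_p(d,u,a)[m,c]$ (and implicitly correcting the statement's typo $N_p(d,s,b)$). What it costs is two facts outside the paper's toolkit that you assert rather than prove: (i) the identity $\nu_p\binom{up^k}{l}=k-\nu_p(l)$ for $1\le l\le p^k$, a true generalization of Lemma \ref{binomial} (provable, e.g., from $\binom{up^k}{l}=\tfrac{up^k}{l}\binom{up^k-1}{l-1}$ and Lucas' theorem), and (ii) the descent statement that sides of the $(x-\beta)$-polygon over $L$ with linear residual polynomials correspond bijectively to primes of $\Z_K$ above $p$ attached to $g_i$ of residue degree exactly $d$; this is standard (any irreducible factor $G$ of $F$ over $\Q_p$ with $\ol G$ a power of $g_i$ has residue degree divisible by $d$ and splits over $L$ into conjugate factors, exactly one attached to each root of $g_i$), but it must be stated or cited from \cite{Nar}. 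Two smaller points to repair in a final write-up: your coefficient formula is given only for $1\le i\le m$ while the principal polygon has length $p^k$, which may exceed $m$; happily the same formula persists for $i>m$ with $\binom{m}{i}=0$, since then the coefficient is $\binom{n}{i}\beta^{n-i}=-a^{p^k}\binom{n}{i}\beta^{m-i}$, so your hull claim survives, but this case should be addressed. Likewise, in case (4) with $\delta=\kappa$ the constant term can have valuation strictly larger than $\rho=\min\{\delta,\kappa\}$; this is harmless, as the first side then only steepens and the polygon still has $k+1$ sides of degree $1$ each.
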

\begin{remark}
Theorem \ref{dn2} implies \cite[Theorem 2.7]{BFT1} when the case $m=1$ is previously considered.

\end{remark}
\begin{corollary}\label{cordn2}
For $F(x)= x^{(s+1)\cdot 2^r \cdot 3^k}+ax^{s\cdot2^r\cdot 3^k}+b$ with $s$ is a positive rational integer. If one of the following conditions holds:
\begin{enumerate}
\item $r=0$, $k \ge 5$, $a \equiv \pm1 \md{243}$,  and $b\equiv 81, 162 \md{243}$, 
\item $r=0$, $k \ge 5$, $a \equiv 80, 82, 161, 163  \md{243}$,  and $b\equiv 0 \md{243}$, 
\item $r=0$, $k=3$, $a \equiv \pm1  \md{81}$,  and $b\equiv 0 \md{81}$, 
\item $r\ge 1$, $k\ge 2$, $a \equiv -1  \md{27}$,  and $b\equiv 9, 18 \md{27}$, 
\item $r\ge 1$, $k\ge 2$, $a \equiv 8, 17  \md{27}$,  and $b\equiv 0 \md{27}$, 
\item $r\ge 1$, $k=1$, $a \equiv -1  \md{9}$,  and $b\equiv 0 \md{9}$, 
\item $r = 1$, $k\ge 7$, $a \equiv 1  \md{3^8}$,  and $b\equiv 3^7, 2 \cdot 3^7 \md{3^8}$, 
\item $r = 1$, $k\ge 7$, $a \equiv 1+3^7, 1+2\cdot3^7  \md{3^8}$,  and $b\equiv 0 \md{3^8}$, 
\item $r = 1$, $k=6$, $a \equiv 1  \md{3^7}$,  and $b\equiv 0 \md{3^7}$, 
\item $r = 2$, $k\ge 5$, $a \equiv 1  \md{243}$,  and $b\equiv 81, 162 \md{243}$, 
\item $r = 2$, $k\ge 5$, $a \equiv 82, 163  \md{243}$,  and $b\equiv 0 \md{243}$, 
\item $r = 2$, $k=3$, $a \equiv 1  \md{243}$,  and $b\equiv 0 \md{243}$, 
\end{enumerate}
then $K$ is not monogenic.
\end{corollary}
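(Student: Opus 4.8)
The plan is to derive Corollary \ref{cordn2} as a direct specialization of Theorem \ref{dn2} to the prime $p=3$. Here $n=(s+1)2^r3^k$ and $m=s\,2^r3^k$, so $n-m=2^r3^k$. First I would check that in every one of the twelve cases the standing hypotheses of Theorem \ref{dn2} hold with $p=3$: each listed congruence forces $a\equiv\pm1\pmod{3}$ (hence $3\nmid a$), $3\mid b$, and $k\ge1$ (hence $3\mid n-m$); moreover $n-m=u\cdot3^k$ with $u=2^r$ and $3\nmid u$, so the integer ``$k$'' of the corollary is exactly the one appearing in the theorem. Since $p-1=2$, the two invariants of the theorem become $\delta=\n_3(b)$ and $\k=\n_3(a^2-1)=\n_3((a-1)(a+1))$, and I would read both off directly from the congruences, using that $a-1$ and $a+1$ cannot both be divisible by $3$.

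Next, for each case I would identify which of the conditions (1), (2), (4) of Theorem \ref{dn2} applies (condition (3) is never needed). The twelve cases fall into three groups: those where $\delta$ is the strict minimum of $\{\delta,\k,k+1\}$ (invoking (1)), those where $\k$ is (invoking (2)), and those where $k+1\le\min\{\k,\delta\}$ (invoking (4)). For instance, in case (1) one has $\delta=4$ while $\k\ge5$ and $k+1\ge6$, so (1) applies; in case (2), $\k=4$ while $\delta\ge5$ and $k+1\ge6$, so (2) applies; in case (3), $k+1=4\le\min\{\k,\delta\}$, so (4) applies.

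The heart of the argument is then the computation of $N_3(d,2^r,a)$, i.e. the factorization type of $x^{2^r}+\ol{a}$ over $\F_3$, since this determines which degree $d$ makes the required inequality $N_3(d)<c\,N_3(d,2^r,a)$ hold (with $c\in\{\delta,\k,k+1\}$ the relevant coefficient). Reducing $a$ mod $3$ gives $\ol{a}=\pm1$. When $\ol{a}=-1$ the polynomial is $x^{2^r}-1$, whose only roots in $\F_3$ are $\pm1$, both simple, so it has exactly two linear factors for every $r\ge1$; I take $d=1$ and use $N_3(1,2^r,a)=2$. When $\ol{a}=1$: for $r=0$ the polynomial $x+1$ is linear, so $N_3(1,1,a)=1$; for $r=1$, $x^2+1$ is irreducible over $\F_3$ (as $-1$ is a non-square), so $N_3(2,2,a)=1$; and for $r=2$, the primitive eighth roots of unity lie in $\F_9$ (since $3^2\equiv1\pmod{8}$), so $x^4+1$ splits into two irreducible quadratics and $N_3(2,4,a)=2$. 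In the last two situations I take $d=2$.

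Finally, inserting the elementary counts $N_3(1)=N_3(2)=3$, I would verify the strict inequality in each case: e.g. $3<4\cdot1$ (the $r=0$ cases), $3<2\cdot2$ (the $\ol{a}=-1$ cases, where the relevant coefficient is $2$), $3<7\cdot1$ (the $r=1$, $\ol{a}=1$ cases), and $3<4\cdot2$ (the $r=2$, $\ol{a}=1$ cases). Theorem \ref{dn2} then yields non-monogenity in all twelve cases. I expect the only genuinely non-routine step to be the $\F_3$-factorization of $x^{2^r}+\ol{a}$, and in particular pinning down $N_3(d,2^r,a)$ in the $\ol{a}=1$ cases, because there the correct choice is the quadratic factors ($d=2$) rather than linear ones; everything else is bookkeeping of $3$-adic valuations.
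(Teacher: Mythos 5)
Your proposal is correct and takes essentially the same route as the paper: the paper's (very terse) proof consists precisely of the three $\F_3$-factorization facts you establish, namely $N_3(1,2^r,-1)=2$ for $r\ge 1$, $N_3(2,2,1)=1$, and $N_3(2,4,1)=2$, followed by ``a direct application'' of Theorem \ref{dn2}. Your case-by-case identification of which condition of Theorem \ref{dn2} applies, the computation of $\delta$ and $\kappa$, and the verification of the inequalities $N_3(d)<c\,N_3(d,2^r,a)$ simply spell out the bookkeeping that the paper leaves implicit.
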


\begin{corollary}
Under the hypothesis of Theorem \ref{dn2}. If $ \gcd\{ \delta, m\} = 1$, and one of the following conditions holds: 
	\begin{enumerate}
	\item  $\delta < \min \{\k, k+1\}$ and $p < 1+\delta N_p(1,u,a)$, 
	\item  $\k < \min \{ \delta, k+1\}$ and $p <1+ \k N_p(1,u,a)$, 
	
	\item    $\k = \delta \le k  $ and $p<1+ \k N_p(1,s,b)[m, \frac{b}{a+(-a)^{p^k}}]$, 
	\item   $k+1 \le \min \{\k, \delta\}$ and $N_p(d) <1+ (k+1) N_p(1,u,a)$, 
\end{enumerate}
then $K$ is not monogenic.

\end{corollary}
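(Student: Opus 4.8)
The plan is to prove, under each of the four hypotheses, that $p$ is a common index divisor of $K$. Indeed, if $K$ were monogenic then $\Z_K=\Z[\eta]$ for some $\eta$, forcing $(\Z_K:\Z[\eta])=1$ and hence $i(K)=1$; so it suffices to produce a prime dividing $i(K)$. I would invoke the classical counting criterion: if for some residue degree $d$ the number of prime ideals of $\Z_K$ above $p$ of residue degree $d$ strictly exceeds $N_p(d)$, then the splitting type of $p$ cannot be matched by the reduction of any minimal polynomial, so $p\mid(\Z_K:\Z[\eta])$ for every generator $\eta$, i.e.\ $p\mid i(K)$. I specialise this to $d=1$, where $N_p(1)=p$; the goal is therefore to exhibit strictly more than $p$ primes of residue degree $1$ above $p$.

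The extra input beyond Theorem \ref{dn2} is the contribution of the factor $x$ of $\ol F$. Since $p\mid b$ and $p\nmid a$, reduction modulo $p$ gives $\ol F(x)=x^m\,(x^u+\ol a)^{p^k}$ with $\ol a\neq 0$, and the primes attached to $(x^u+\ol a)^{p^k}$ are exactly those counted in Theorem \ref{dn2}. For the factor $x$ I would take $\phi=x$ and read off the $\phi$-Newton polygon of $F$ from $F=x^n+ax^m+b$: the only relevant vertices are $(0,\delta)$, $(m,0)$, $(n,0)$, so the principal (negative-slope) part is the single side $S$ from $(0,\delta)$ to $(m,0)$, of slope $-\delta/m$. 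The degree of $S$ is $\gcd(\delta,m)$, and the hypothesis $\gcd\{\delta,m\}=1$ makes it $1$; hence the residual polynomial of $S$ is linear, hence separable, the side $S$ is regular, and the theorem of Ore attaches to it exactly one prime $\pF_0$ above $p$, of residue degree $\deg(\phi)\cdot 1=1$. Because $\pF_0$ lies over $x$ while every prime coming from $(x^u+\ol a)^{p^k}$ lies over an irreducible factor of $x^u+\ol a$ (which does not vanish at $0$), $\pF_0$ is distinct from all of them.

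It then remains to add $\pF_0$ to the residue-degree-one count furnished by the proof of Theorem \ref{dn2}. In case (1) that proof produces $\delta\,N_p(1,u,a)$ primes of residue degree $1$ from $(x^u+\ol a)^{p^k}$, so together with $\pF_0$ we get $1+\delta\,N_p(1,u,a)$ such primes; the hypothesis $p<1+\delta\,N_p(1,u,a)$ makes this exceed $N_p(1)=p$, and the criterion gives $p\mid i(K)$. Cases (2) and (4) run identically with the multiplicities $\k$ and $k+1$ in place of $\delta$, and case (3), with $\k=\delta\le k$, uses the refined count $\k\,N_p(1,s,b)\big[m,\tfrac{b}{a+(-a)^{p^k}}\big]$ of Theorem \ref{dn2}(3), once more augmented by the single extra prime $\pF_0$. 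Only $d=1$ profits from this device, since $\pF_0$ has residue degree exactly $1$; this explains why the four inequalities are stated at $d=1$ and carry the additive ``$+1$''.

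The main obstacle I expect is not the bookkeeping but the precise justification of the single-prime statement for $S$ and its compatibility with the counts borrowed from Theorem \ref{dn2}. I must confirm that $S$ needs no second-order (higher Montes) refinement — guaranteed here by $d(S)=1$ — so that $\pF_0$ genuinely contributes one prime of residue degree $1$; and I must check that the residue-degree-one families counted in Theorem \ref{dn2} are exactly the stated quantities, especially in the delicate equal-valuation case (3), where the factors of $x^u+\ol a$ dividing $x^m+\ol{\tfrac{b}{a+(-a)^{p^k}}}$ are discarded and a higher-order polygon controls the splitting. Verifying that $\pF_0$ is disjoint from that refined family, so that the ``$+1$'' is legitimately additive, is the single point where the hypothesis $\gcd\{\delta,m\}=1$ together with the shape of the $x$-factor must both be used.
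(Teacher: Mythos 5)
Your proposal is correct and is essentially the argument the paper intends (the paper states this corollary without writing out a separate proof): the hypothesis $\gcd\{\delta,m\}=1$ serves exactly to make the principal $x$-Newton polygon of $F$ a single side from $(0,\delta)$ to $(m,0)$ of degree one, hence one additional prime of residue degree $1$ above $p$, which is then added to the $d=1$ counts coming from the proof of Theorem \ref{dn2} so that Lemma \ref{comindex} with $N_p(1)=p$ yields $p\mid i(K)$. Your treatment of the distinctness of this extra prime from those attached to the factors of $x^u+\ol{a}$, and of the regularity guaranteed by the degree-one side, is exactly the intended justification.
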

	\begin{remark}
	If one of the  conditions in the previous theorem is satisfied, then the polynomial $F(x)$ is $p$-regular. Then, we can  calculate a $p$-integral basis of $K$. So, we can construct the $p$-index form equation $I_2(x_2, x_3, \ldots, x_n) = \pm 1$ which is a Diophantine equation of degree $\frac{n(n-1)}{2}$ with $n-1$ variables, where the coefficients are  in $\Z_{(p)}$;  the localization of $\Z$ at $p$. However, to make a decision about the monogenity of $K$, one needs to solve index form equation which is not a simple task. Indeed, one must use advanced techniques and methods   in addition to computations using powerful computers and algorithms (see \cite{DS}, and \cite{G19}). Due to all these reasons, we have chosen the prime ideal factorization method as our approach.

\end{remark}

\section{Preliminaries}

To prove  our main results, we need some preliminary results that can be found in details in \cite{BFT1}. \\
For any $\eta \in \Z_K$, we denote by $ ( \Z_K : \Z [\eta])$ the index of $\eta$ in $\Z_K$, where $\Z[\eta]$ is the $\Z$-module generated by $\eta$. It is well known from  \cite[Proposition 2.13]{Na} that   
\begin{eqnarray}\label{indexdiscrininant}
 D (\eta) = ( \Z_K : \Z [\eta])^2 D_K,
\end{eqnarray}
where $D(\eta)$ is the discriminant of the minimal polynomial of $\eta$ and $D_K$ is the discriminant of $K$. In 1878, Dedekind gave the explicit factorization of $p\Z_K$ when $p $ does not divide the index $ ( \Z_K : \Z [\eta])$ (see  \cite{R} and \cite[Theorem 4.33]{Na}). He also  gave  a criterion known as   Dedekind's  criterion to test  the divisibility of  the index $(\Z_K:\Z[\eta])$ by $p$ (see \cite[Theorem 6.14]{Co}, \cite{R}, and  \cite{Na}).
Recall that the index  of the field  $K$ is the following quantity:
\begin{eqnarray*}
i(K) = \gcd \ \{ ( \Z_K : \Z [\eta]) \, |\,\eta \in \Z_K \,and \, K= \Q(\eta)  \}.
\end{eqnarray*}  A rational prime integer $p$ dividing $ i(K)$ is called a prime common index divisor of $K$.  If $K$ is monogenic, then $i(K) = 1$. Thus, a field possessing a  prime common index divisor is not monogenic. The existence of common index divisor was first established by Dedekind. He used the above mentioned criterion and his factorization  theorem to show that the cubic number field $K=\Q(\th)$, where $\th$ is a root of $x^3+x^2-2x+8$ cannot be monogenic, since the rational prime integer   $2$ splits completely in $\Z_K$.  Further, he also  gave in \cite{R} a necessary and sufficient condition  on  a given rational  prime integer  $p$ to be a common index divisor of $K$. This condition depends upon the factorization of the prime  $p$ in $\Z_K$ (see also \cite{He, Hen}). If $p$ does not divide $i(K)$, then there exist $\eta \in \Z_K$ such that $p$ does not divide the index $( \Z_K : \Z [\eta])$. So,  by Dedekind's theorem,  we explicitly  factorize  $p\Z_K$; it is  analogous to the factorization of the minimal polynomial $P_{\eta}(x)$ of $\eta$  modulo $p$. But, if $p$ divides the index $ i(K)$, then the factorization of $p\Z_K$ is more difficult. Hensel  proved in \cite{Hecor} that the prime ideals of $\Z_K$ lying above $p$ are in one-to-one correspondence with irreducible factors of $F(x)$ in $\Q_p(x)$. In 1928,   
O. Ore  developed a method for factoring $F(x)$ in $\Q_p(x)$, and so, factoring $p$ in $\Z_K$ when $F(x)$ is $p$-regular (see \cite{O}). This  method is  based on Newton polygon techniques. So, let us  recall   some fundamental facts on  Newton polygon techniques  applied on prime ideal factorization. {For more details,  refer to \cite{EMN, Nar,  MN92, O}}.
Let $p$  be a rational prime integer and $\n_p$  the discrete valuation of $\Q_p(x)$  defined on $\Z_p[x]$ by $$\n_p(\sum_{i=0}^{m} a_i x^i) = \min \{ \n_p(a_i), \, 0 \le i \le m\}.$$ Let $\phi \in \mathbb{Z}[x]$ be a monic polynomial whose reduction modulo $p$ is irreducible. Any monic irreducible polynomial $F(x) \in \mathbb{Z}[x]$ admits a unique $\phi$-adic development $$ F(x )= a_0(x)+ a_1(x) \phi(x) + \cdots + a_n(x) {\phi(x)}^n$$ with $ deg \ ( a_i (x))  < deg \ ( \phi(x))$. For every $0\le i \le n,$   let $ u_i = \n_p(a_i(x))$. The $\phi$-Newton polygon of $F(x)$ with respect to $p$ is the  lower boundary convex envelope of the set  of points $ \{  ( i , u_i) \, , 0 \le i \le n \, , a_i(x) \neq 0  \}$ in the Euclidean plane, which we denote   by $N_{\phi} (F)$. The polygon  $N_{\phi} (F)$ is the union of different adjacent sides $ S_1, S_2, \ldots , S_g$ with increasing slopes $ \lambda_1, \lambda_2, \ldots,\lambda_g$. We shall write $N_\phi(F) = S_1+S_2+\cdots+S_g$. The polygon determined by the sides of negative slopes of $N_{\phi}(F)$ is called the  $\phi$-principal Newton polygon of $F(x)$  with respect to $p$ and will be denoted by $\npp{F}$. The length of $\npp{F}$ is $ l(\npp{F}) = \nu_{\overline{\ph}}(\overline{F(x)})$;  the highest power of $\phi$ dividing $F(x)$ modulo $p$.\\
Let $\mathbb{F}_{\phi}$ be the finite field   $ \mathbb{Z}[x]\textfractionsolidus(p,\phi (x)) \simeq \mathbb{F}_p[x]\textfractionsolidus (\overline{\ph}) $.
We attach to  any abscissa $ 0 \leq i \leq  l(\npp{F})$ the following residue coefficient $ c_i \in  \mathbb{F}_{\phi}$
:

$$c_{i}=
\left
\{\begin{array}{ll} 0,& \mbox{if }  (i , u_i ) \, \text{ lies  strictly  above }  \ \npp{F},\\
\left(\dfrac{a_i(x)}{p^{u_i}}\right)
\,\,
\md{(p,\phi(x))},&\mbox{if } \  (i , u_i ) \, \text{lies on } \npp{F}. 
\end{array}
\right.$$ Now, let $S$ be one of the sides of $\npp{F}$ and $\lambda = - \frac{h}{e} $  its slope, where $e$ and $h$ are two positive coprime integers. The length of $S$, denoted by $l(S)$ is the length of its  projection to the horizontal axis. The degree of $S$ is $ d = d(S) = \frac{l(S)}{e}$; it is equal to the number of segments into which the integral lattice divides $S$. More precisely, if $ (s , u_s)$ is the initial point of $S$, then the points with integer coordinates  lying in $S$ are exactly $$  (s , u_s) ,\ (s+e , u_s - h) ,  \ldots, \, \mbox{and}\, (s+de , u_s - dh).$$ We attach to $S$ the following residual polynomial:  $$ R_{\l}(F)(y) = c_s + c_{s+e}y+ \cdots + c_{s+(d-1)e}y^{d- 1}+ c_{s+de}y^d \in \mathbb{F}_{\phi}[y].$$  The $\ph$-index of $F(x)$, denoted by $ind_{\ph}(F)$, is  deg$(\ph)$ multiplied by  the number of points with natural integer coordinates that lie below or on the polygon $\npp{F}$, strictly above the horizontal axis  and strictly beyond the vertical axis (see  \cite[Def. 1.3]{EMN} and FIGURE $1$).The polynomial $F(x)$ is said to be $\phi$-regular with respect to $p$ if for each side $S_k$ of $\npp{F}$, the associated residual polynomial $ R_{\l_k}(F)(y)$ is separable in $\mathbb{F}_\phi[y]$. The polynomial $F(x)$ is said to be $p$-regular if it is $\phi_i$-regular for every $ 1 \leq i \leq t$, where $\overline{F(x)}=\prod_{i=1}^t\overline{\ph_i}^{l_i}$is the factorization  of $\overline{F(x)}$ into a product  of powers of distinct monic irreducible polynomials in  $\mathbb{F}_p [x]$. For every $i=1,\dots,t$, let  $N_{\ph_i}^+(F)=S_{i1}+\dots+S_{ir_i}$ and for every {$j=1,\dots, r_i$},  let $R_{\l_{ij}}(F)(y)=\prod_{s=1}^{s_{ij}}\psi_{ijs}^{n_{ijs}}(y)$ be the factorization of $R_{\l_{ij}}(F)(y)$ in $\F_{\ph_i}[y]$. 
By the corresponding statements of the product, the polygon, and  the residual polynomial (see \cite[Theorems 1.13, 1.15 and 1.19]{Nar}),    we have the following  theorem of Ore, which will be often used in the proof of our theorems (see   \cite[Theorem 1.7 and Theorem 1.9]{EMN},  \cite{MN92} and \cite{O}):

\begin{theorem}\label{ore} (Ore's Theorem)
	Under the above notations, we have: 
	\begin{enumerate}
		\item
		$$ \nu_p((\Z_K:\Z[\th]))\ge \sum_{i=1}^t ind_{\ph_i}(F).$$ Moreover, the  equality holds if $F(x)$ is $p$-regular
		\item
		If  $F(x)$ is $p$-regular, then 
		$$p\Z_K=\prod_{i=1}^t\prod_{j=1}^{r_i}
		\prod_{s=1}^{s_{ij}}\pF^{e_{ij}}_{ijs},$$ where $e_{ij}$ is the ramification index
		of the side $S_{ij}$ and $f_{ijs}=\mbox{deg}(\ph_i)\times \mbox{deg}(\psi_{ijs})$ is the residue degree of $\mathfrak{p}_{ijs}$ over $p$.
	\end{enumerate}
\end{theorem}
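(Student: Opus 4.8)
The plan is to assemble both assertions from the three structural results on which Ore's method rests --- the theorem of the product, the theorem of the polygon, and the theorem of the residual polynomial (\cite[Theorems 1.13, 1.15 and 1.19]{Nar}) --- combined with Hensel's correspondence between the primes of $\Z_K$ above $p$ and the monic irreducible factors of $F(x)$ in $\Q_p[x]$ (\cite{Hecor}). Throughout one passes to the completion $\Q_p$, factors $F$ there, and recovers the global statement about $p\Z_K$ at the end via this correspondence.

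First I would reduce to a single $\ph_i$. Writing $\overline{F(x)}=\prod_{i=1}^t\overline{\ph_i}^{l_i}$ and lifting by Hensel's lemma (the $\overline{\ph_i}^{l_i}$ are pairwise coprime), one obtains $F=\prod_{i=1}^t F_i$ in $\Z_p[x]$ with $\overline{F_i}=\overline{\ph_i}^{l_i}$ and $\deg F_i=l_i\deg\ph_i$. The theorem of the product (which makes Newton polygons add and residual polynomials multiply under multiplication of factors) then forces $\npp{F}$ with respect to $\ph_i$ to coincide with that of $F_i$, since $\ph_i\nmid\overline{F_j}$ for $j\neq i$; it also shows that both the $p$-index and the set of primes above $p$ split as a direct sum over $i$. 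Hence it suffices to treat one factor, i.e. the case $\overline{F}=\overline{\ph}^{l}$.

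Fixing such a $\ph=\ph_i$ and looking at $\npp{F}=S_{i1}+\dots+S_{ir_i}$, the theorem of the polygon decomposes $F_i$ over $\Q_p$ as $\prod_j G_{ij}$, where $G_{ij}$ gathers the roots whose valuation is dictated by the slope $\l_{ij}=-h_{ij}/e_{ij}$, the horizontal length giving $\deg G_{ij}=l(S_{ij})\deg\ph$ and $e_{ij}$ being forced as the ramification index of every prime attached to $S_{ij}$. On each side the theorem of the residual polynomial matches the factorization $R_{\l_{ij}}(F)(y)=\prod_s\psi_{ijs}^{n_{ijs}}(y)$ in $\fph[y]$ with a factorization of $G_{ij}$, each $\psi_{ijs}$ contributing a factor whose attached prime $\pF_{ijs}$ has residue degree $\deg(\ph_i)\deg(\psi_{ijs})$. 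When $F$ is $p$-regular every $R_{\l_{ij}}(F)$ is separable, so all $n_{ijs}=1$, each local factor of $F$ is irreducible over $\Q_p$, and one reads off exactly $\pF_{ijs}^{e_{ij}}$; collecting over $i,j,s$ gives the factorization of $p\Z_K$ in part (2).

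For part (1) I would compare $ind_{\ph_i}(F)$, which counts (with weight $\deg\ph_i$) the lattice points lying on or below $\npp{F}$, strictly above the horizontal axis and beyond the vertical axis, with $\n_p((\Z_K:\Z[\th]))$. The inequality $\n_p((\Z_K:\Z[\th]))\ge\sum_i ind_{\ph_i}(F)$ follows because each such interior lattice point contributes at least one unit to the $p$-adic valuation of the index, reflecting an element of $\Z_K$ not captured by $\Z[\th]$. The hard part is the equality clause, and this is where separability of the residual polynomials is essential: it certifies that the Ore--Montes algorithm terminates at first order, so no higher-order Newton polygon can supply extra lattice points and the lower bound is attained. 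Granting the three cited structural theorems, the crux is to verify that the total local contribution $\sum_{i,j,s} e_{ij}f_{ijs}$ --- equivalently the discriminant count read off from the polygon via $D(\th)=(\Z_K:\Z[\th])^2 D_K$ --- exactly saturates the polygon index, which is the main technical obstacle and the heart of Ore's original argument.
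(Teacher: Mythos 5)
This theorem is not proved in the paper at all: it is quoted as a known result, obtained by assembling the theorems of the product, the polygon, and the residual polynomial from \cite[Theorems 1.13, 1.15 and 1.19]{Nar} together with Hensel's correspondence between primes of $\Z_K$ above $p$ and irreducible factors of $F(x)$ over $\Q_p$ (see also \cite{EMN}, \cite{MN92}, \cite{O}). Your proposal assembles the statement from exactly these same cited ingredients (with part (1) deferred, as in the literature, to the index/discriminant count that constitutes the theorem of the index), so it takes essentially the same approach as the paper.
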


\begin{example} Let $K = \Q(\th)$, where $\th$ is a root of the monic polynomial $F(x)=x^5+3x^4+24$. Since $F(x)$ is  $3$-Eisenstein polynomial, then it is irreducible over $\Q$. The factorization of  $F(x)$ in  $\F_2[x]$ is $\ol{F(x)}= \ol{(x+1)} \ol{x}^4$. The $x$-Newton polygon of $F(x)$ has a single  side $S$ of degree $1$ joining the points $(0,3)$ and $(4,0)$ with slope $\l = \frac{-3}{4}$ (see FIGURE 1). Thus, the residual polynomial $R_{\l}(F)(y) \in \F_{x}[y] \simeq \F_2[y]$ is irreducible as it is of degree $1$. It follows that the polynomial $F(x)$ is $2$-regular. By Theorem \ref{ore}, we have $$2\Z_K = \pF^4 \qF \, \mbox{ with }\,  f(\pF/2)= f(\qF/2)=1,$$ and $$\n_2((\Z_K:\Z[\th]))= ind_{x}(F)+ ind_{x+1}(F)= 3+0= 3.$$ 
\end{example}

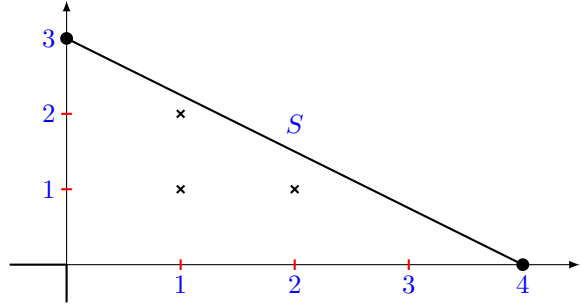
\begin{figure}[htbp]

	\centering
	
	\begin{tikzpicture}[x=1.5cm,y=1cm]
	\draw[latex-latex] (0,3.5) -- (0,0) -- (4.5,0) ;

	\draw[thick] (0,0) -- (-0.5,0);
	\draw[thick] (0,0) -- (0,-0.5);
	
	\draw[thick,red] (1,-2pt) -- (1,2pt);
	\draw[thick,red] (2,-2pt) -- (2,2pt);
	\draw[thick,red] (3,-2pt) -- (3,2pt);
	\draw[thick,red] (4,-2pt) -- (4,2pt);
	%\draw[thick,red] (5,-2pt) -- (5,2pt);
	%\draw[thick,red] (6,-2pt) -- (6,2pt);
	%\draw[thick,red] (7,-2pt) -- (7,2pt);
	%\draw[thick,red] (8,-2pt) -- (8,2pt);
	%\draw[thick,red] (9,-2pt) -- (9,2pt);
	\draw[thick,red] (-2pt,1) -- (2pt,1);
	\draw[thick,red] (-2pt,2) -- (2pt,2);
	\draw[thick,red] (-2pt,3) -- (2pt,3);
	%\draw[thick,red] (-2pt,4) -- (2pt,4);	
	%\draw[thick,red] (-2pt,5) -- (2pt,5);	
	\node at (1,0) [below ,blue]{\footnotesize  $1$};
	\node at (2,0) [below ,blue]{\footnotesize $2$};
	\node at (3,0) [below ,blue]{\footnotesize  $3$};
	\node at (4,0) [below ,blue]{\footnotesize  $4$};
	%\node at (5,0) [below ,blue]{\footnotesize  $5$};
	%\node at (6,0) [below ,blue]{\footnotesize  $6$};
	%\node at (7,0) [below ,blue]{\footnotesize $7$};
	%\node at (8,0) [below ,blue]{\footnotesize  $8$};
	\node at (0,1) [left ,blue]{\footnotesize  $1$};
	\node at (0,2) [left ,blue]{\footnotesize  $2$};
	\node at (0,3) [left ,blue]{\footnotesize  $3$};
	%\node at (0,4) [left ,blue]{\footnotesize  $4$};
	%\node at (0,5) [left ,blue]{\footnotesize  $5$};
	\draw[thick, mark = *] plot coordinates{(0,3)  (4,0)};
	\draw[thick, only marks, mark=x] plot coordinates{(1,1) (1,2)  (2,1)     };
	%\draw[thick, only marks, mark=*] plot coordinates{(3,3) (1,3) (2,2) (5,3) (7,3) (6,2) };	
	\node at (2,1.6) [above  ,blue]{\footnotesize $S$};
	%\node at (3,0.5) [above   ,blue]{\footnotesize $S_{2}$};
	%\node at (6,0.4) [above   ,blue]{\footnotesize $S_{3}$};
	\end{tikzpicture}
	\caption{ The $\ph$-principal Newton polygon  $\npp{F}$ with respect to $\n_2$.}
\end{figure}

	Since it is difficult to find the $\ph$-adic development of  the trinomial  $ F(x)$, we will use any adequate  $\ph$-admissible development of $F(x)$. This technique will allow us to comfortably apply Theorem \ref{ore}. In what follows, we recall some useful facts concerning this technique.  Let
\begin{equation}\label{admissdev}
	F(x) = \sum_{j=0}^{n}A_j(x) \ph(x)^j,  \, \,  A_j(x) \in \mathbb{Z}_p[x]
\end{equation}
be a  $\ph$-development of $F(x)$, not necessarily the $\ph$-adic one. Take $ \omega_j = \nu_p(A_j(x))$ for all $ 0 \leq j \leq n$. Let $N$ be the principal Newton polygon of the set of points $\{ ( j, \omega_j ), \, \,  \,  0 \leq j \leq n, \omega_j \neq \infty \}$. To any $ 0 \leq j \leq n$, we attach the following residue coefficient:  $$c^{'}_{j}=
\left
\{\begin{array}{ll} 0,& \mbox{ if } (j,{\it \omega_j}) \mbox{ lies strictly
	above } N
,\\
\left(\dfrac{A_{j}(x)}{p^{\omega_j}}\right)
\,\,
\md{(p,\phi(x))},&\mbox{ if }(j,{\it \omega_j}) \mbox{ lies on }N.
\end{array}
\right.$$
Moreover,  for any side $S$ of $N$ with slope $\l$, we  define the residual polynomial associated to $S$ and denoted by $R_{\l}^{'}(F)(y)$ (similar to the residual polynomial $R_{\l}(F)(y)$ defined from the $\ph$-adic development of $F(x)$). We say that the $ \ph$-development  (\ref{admissdev}) of $F(x)$ is admissible if $ c^{'}_{j} \neq 0 $ for each abscissa $j$ of a vertex of $N$. Recall  that   
$ c^{'}_{j} \neq  0 $ if and only if $ \overline{\ph(x)} $ does not divide $ \overline{\left(\dfrac{A_{j}(x)}{p^{\omega_j}}\right)}.$  For more details, refer to  \cite{Nar}.   The following lemma shows an important relationship between the $\ph$-adic development and any $\ph$-admissible development of a given polynomial $F(x)$.
\begin{lemma} \label{admiss}$($\cite[Lemma $1.12$]{Nar}$)$ \\If a $ \ph$-development of $F(x)$ is admissible, then $ \npp {F} = N$ and $ c^{'}_j = c_j $. In particular, for any segment $S$ of $N$ with slope $\l$, we have $R_{\l}^{'}(F)(y) =R_{\l}(F)(y)$ (up to multiply by a non-zero element of $\F_{\ph}$).  
\end{lemma}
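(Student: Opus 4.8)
The plan is to compare the given admissible $\ph$-development (\ref{admissdev}) with the genuine $\ph$-adic development of $F(x)$ and to show, term by term, that the admissible data reproduce the $\ph$-adic data on the principal part. First I would expand each coefficient $A_j(x)$ of (\ref{admissdev}) in its own $\ph$-adic form, $A_j(x)=\sum_{l\ge 0}b_{j,l}(x)\ph(x)^l$ with $\deg b_{j,l}<\deg\ph$. Substituting and collecting powers of $\ph$ gives the true $\ph$-adic coefficients $a_i(x)=\sum_{j\le i}b_{j,i-j}(x)$. Because $\ph$ is monic and $\nu_p$ is the Gauss valuation, one has $\nu_p(A_j)=\min_l \nu_p(b_{j,l})$, so $\nu_p(b_{j,l})\ge \omega_j$ for every $l$; geometrically, the point $(j,\omega_j)$ of the admissible data ``spreads'' to the points $(j+l,\nu_p(b_{j,l}))$, all lying weakly up and to the right of $(j,\omega_j)$.

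The key observation I would isolate is that the principal polygon $N$ consists of sides of strictly negative slope, hence is strictly decreasing. Consequently, since each $(j,\omega_j)$ lies on or above $N$ by definition of the lower hull, every spread point $(j+l,\nu_p(b_{j,l}))$ with $l\ge 0$ also lies on or above $N$; more sharply, for $j'<j$ every contribution $b_{j',j-j'}$ to a fixed coefficient $a_j$ sits \emph{strictly} above the height of $N$ at abscissa $j$. This immediately yields that each $\ph$-adic point $(i,u_i)$ lies on or above $N$, so $\npp{F}$ lies on or above $N$. For the reverse comparison I would invoke admissibility: at a vertex $(j_0,\omega_{j_0})$ of $N$ the hypothesis $c_{j_0}^{'}\ne 0$ forces $\nu_p(b_{j_0,0})=\omega_{j_0}$ with $\ol{\ph}\nmid\ol{b_{j_0,0}/p^{\omega_{j_0}}}$, while all remaining contributions to $a_{j_0}$ come from indices $j'<j_0$ and are strictly higher. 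Hence $u_{j_0}=\omega_{j_0}$ exactly, so the $\ph$-adic point meets $N$ at every vertex. A convex lower hull that lies above $N$ yet passes through all vertices of $N$ must coincide with $N$ on each segment, giving $\npp{F}=N$.

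It then remains to match the residue data, and here I would reuse the strict-decrease observation. For any abscissa $j$ lying on a side of $N$, the only contribution to $a_j$ at the minimal height (the height of $N$ at $j$) is the $\ph$-adic constant term $b_{j,0}$ of $A_j$, since every cross term $b_{j',j-j'}$ with $j'<j$ is strictly higher. Reducing modulo $(p,\ph)$ then gives $c_j=\ol{a_j/p^{u_j}}=\ol{b_{j,0}/p^{\omega_j}}=\ol{A_j/p^{\omega_j}}=c_j^{'}$, with both sides equal to $0$ precisely when the relevant point lies strictly above $N$; thus the residue coefficients agree for every $j$. Feeding this into the definition of the residual polynomial attached to a side $S$ of slope $\l$ yields $R_{\l}^{'}(F)(y)=R_{\l}(F)(y)$ up to a nonzero scalar of $\F_{\ph}$, which completes the proof.

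I expect the main obstacle to be the valuation bookkeeping in the middle step: one must verify cleanly that $\nu_p(b_{j,l})\ge\omega_j$ and, above all, that contributions coming from distinct indices $j'\ne j$ never reach the minimal height at a given abscissa. This separation of contributions is exactly what the strict negativity of the slopes of $\npp{F}$ and the admissibility condition at the vertices guarantee; once it is in place, the equality of the polygons, of the residue coefficients, and of the residual polynomials all follow formally.
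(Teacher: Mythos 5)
Your proposal is correct. Bear in mind, though, that the paper itself contains no proof of Lemma \ref{admiss}: it is imported verbatim from Gu\`ardia--Montes--Nart \cite[Lemma 1.12]{Nar}, so the only possible comparison is with that reference, and your argument is in substance the standard proof given there. The three pillars are exactly the right ones: (i) expanding each coefficient $\ph$-adically, $A_j=\sum_l b_{j,l}\ph^l$, and collecting to get $a_i=\sum_{j+l=i}b_{j,l}$; (ii) the separation estimate coming from the strictly negative slopes of $N$, namely that a cross term $b_{j',i-j'}$ with $j'<i$ has $\nu_p(b_{j',i-j'})\ge\omega_{j'}\ge$ (height of $N$ at $j'$) $>$ (height of $N$ at $i$), so it can never reach the minimal level; and (iii) admissibility at the vertices, which forces $\nu_p(b_{j_0,0})=\omega_{j_0}$ and $\ol{\ph}\nmid\ol{b_{j_0,0}/p^{\omega_{j_0}}}$, pinning the $\ph$-adic points to the vertices of $N$. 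Only two pieces of bookkeeping would need to be written out to make this a complete standalone proof, and you flagged both as the delicate spots: the identity $\nu_p(A_j)=\min_l\nu_p(b_{j,l})$ (reduce $A_j/p^{\min_l\nu_p(b_{j,l})}$ modulo $p$ and use that the nonzero residues $\ol{b_{j,l}/p^{\min_l \nu_p(b_{j,l})}}$ have degree $<\deg\ph$, so they give a nontrivial $\ol{\ph}$-adic expansion); and the right-hand endpoint, i.e.\ that $\npp{F}$ has no side of negative slope beyond the last vertex $(j_e,\omega_{j_e})$ of $N$, which follows since every $b_{j,l}$ has $\nu_p\ge\omega_j\ge\omega_{j_e}$, hence every $u_i\ge\omega_{j_e}=u_{j_e}$. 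With those two lines supplied, the equality of polygons, of the residue coefficients $c_j=c^{'}_j$, and of the residual polynomials follows exactly as you describe.
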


For the determination of certain Newton polygons, we will need to use the following lemmas.
\begin{lemma} \label{binomial}  $($\cite[Lemma 3.4]{BF}$)$\\
	Let $p$ be a rational prime integer and $r$  a positive integer. Then\begin{eqnarray*}
	\nu_p\left(\binom{p^r}{j}\right)  =  r - \nu_p(j) 
	\end{eqnarray*}for any integer $j= 1,\dots,p^r-1 $. 
\end{lemma}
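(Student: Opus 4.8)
The plan is to reduce the computation of $\nu_p\binom{p^r}{j}$ to an elementary valuation count by exploiting the classical absorption identity $j\binom{p^r}{j}=p^r\binom{p^r-1}{j-1}$, which holds for every $1\le j\le p^r$. Taking $p$-adic valuations of both sides gives
\[
\nu_p(j)+\nu_p\!\left(\binom{p^r}{j}\right)=r+\nu_p\!\left(\binom{p^r-1}{j-1}\right),
\]
so the whole lemma follows at once provided I can show that the shifted binomial coefficient $\binom{p^r-1}{j-1}$ is a $p$-adic unit, i.e.\ that $\nu_p\bigl(\binom{p^r-1}{j-1}\bigr)=0$ for all $1\le j\le p^r-1$.

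The key step, and what I expect to be the main obstacle, is therefore to prove that $p\nmid\binom{p^r-1}{k}$ for every $0\le k\le p^r-1$. I would establish this directly from the product expansion
\[
\binom{p^r-1}{k}=\prod_{i=1}^{k}\frac{p^r-i}{i},
\]
by showing that each factor contributes valuation $0$. Indeed, for $1\le i\le p^r-1$ one has $\nu_p(i)\le r-1<r=\nu_p(p^r)$, so the two terms in $p^r-i$ have distinct valuations and hence $\nu_p(p^r-i)=\min\{r,\nu_p(i)\}=\nu_p(i)$. Consequently $\nu_p\bigl((p^r-i)/i\bigr)=0$ for each $i$, and summing over $i$ gives $\nu_p\bigl(\binom{p^r-1}{k}\bigr)=0$.

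Combining the two steps, the displayed valuation identity collapses to $\nu_p(j)+\nu_p\bigl(\binom{p^r}{j}\bigr)=r$, which is exactly the claimed formula. An alternative route, should one prefer to avoid the absorption identity, is to apply Legendre's formula $\nu_p(n!)=\frac{n-s_p(n)}{p-1}$ to the three factorials in $\binom{p^r}{j}$ and then track base-$p$ digits: writing $\nu_p(j)=v$, a short borrow computation shows $s_p(j)+s_p(p^r-j)=1+(p-1)(r-v)$, whence $\nu_p\bigl(\binom{p^r}{j}\bigr)=\frac{s_p(j)+s_p(p^r-j)-1}{p-1}=r-v$. Either way, the only genuinely delicate point is the unit statement for $\binom{p^r-1}{k}$ (equivalently, the digit-sum bookkeeping), and both versions dispose of it in a couple of lines.
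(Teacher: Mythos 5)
Your proof is correct. Note that the paper does not actually prove this lemma---it is imported by citation from \cite[Lemma 3.4]{BF}---so there is no internal argument to compare against; your write-up serves as a complete, self-contained proof. Both of your routes are sound: the absorption identity $j\binom{p^r}{j}=p^r\binom{p^r-1}{j-1}$ combined with the unit statement $\nu_p\left(\binom{p^r-1}{k}\right)=0$ (which follows correctly from the ultrametric observation that $\nu_p(p^r-i)=\nu_p(i)$ for $1\le i\le p^r-1$, since $\nu_p(i)\le r-1<r$), and the Legendre/Kummer digit count, which indeed yields $r-\nu_p(j)$ carries when adding $j$ and $p^r-j$ in base $p$.
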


\begin{lemma}\label{lemtech}$($\cite[Lemma 4.1]{BFT1}$)$\\
	Let $p$ be a rational prime integer and $F(x) \in \Z[x] $ be a monic polynomial which is separable modulo $p$. Let $g(x)$ be a monic irreducible factor of $\ol{F(x)}$ in $\F_p[x]$. Then, we can select a monic lifting $\ph(x) \in \Z[x]$ of $g(x)$ (this means that $\ol{\ph(x)} = g(x)$) such that  $$ F(x)= \ph(x) U(x)+pT(x)$$ for some polynomials $U(x)$ and $T(x) \in \Z[x]$ such that $\ol{\ph(x)}$ does not divide $ \overline{U(x)T(x)}$. 
\end{lemma}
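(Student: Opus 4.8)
The final statement to prove is Lemma \ref{lemtech}: given a monic polynomial $F(x)\in\Z[x]$ separable modulo $p$ and a monic irreducible factor $g(x)$ of $\ol{F(x)}$ in $\F_p[x]$, one can choose a monic lift $\ph(x)$ of $g(x)$ so that $F(x)=\ph(x)U(x)+pT(x)$ with $\ol{\ph}$ not dividing $\ol{U(x)T(x)}$.

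\smallskip

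The plan is to start from any monic lift $\ph_0(x)\in\Z[x]$ of $g(x)$ and perform polynomial division in $\Z[x]$. Concretely, first I would divide $F(x)$ by $\ph_0(x)$ with remainder: since $\ph_0$ is monic, there exist $U_0(x),R_0(x)\in\Z[x]$ with $F(x)=\ph_0(x)U_0(x)+R_0(x)$ and $\deg R_0<\deg\ph_0$. Reducing modulo $p$ gives $\ol{F}=g\,\ol{U_0}+\ol{R_0}$. Because $g$ divides $\ol{F}$ and $\deg\ol{R_0}<\deg g$, we must have $\ol{R_0}=0$, i.e. $p\mid R_0(x)$ coefficientwise, so $R_0=pT_0$ for some $T_0\in\Z[x]$. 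This already produces a representation $F=\ph_0 U_0+pT_0$; the only thing that can fail is the non-divisibility condition, so the heart of the argument is to arrange, by a suitable choice of lift, that $\ol{\ph}$ divides neither $\ol{U}$ nor $\ol{T}$.

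\smallskip

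To handle the divisibility condition I would use separability of $F$ modulo $p$. Write $\ol{F}=\prod_{i=1}^t g_i^{\,l_i}$; separability means every $l_i=1$, so $g=g_1$ appears to exactly the first power and $g\nmid \ol{U_0}$ already holds (otherwise $g^2\mid\ol F$). Thus the automatic remainder decomposition gives the $U$ side for free, and it remains only to fix the $T$ side. The mechanism is to replace $\ph_0$ by $\ph=\ph_0+p\,h(x)$ for a well-chosen $h\in\Z[x]$ with $\deg h<\deg\ph_0$; this does not change $\ol{\ph}=g$ and it does not change $\ol{U}$ modulo $p$ (the perturbation is order $p$), but it shifts the remainder: dividing again, $F=\ph\,U+pT$ with $\ol T$ differing from $\ol{T_0}$ by a controllable term coming from $-h\,\ol U$. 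Since $g\nmid\ol U$, the map $\ol h\mapsto\ol T$ runs through a full residue class structure, and one can choose $\ol h$ so that $g\nmid\ol T$; alternatively, and more cleanly, one invokes that $F$ and $F'$ are coprime modulo $p$ (separability) to guarantee that the two cofactors $\ol U$ and $\ol T$ cannot share the factor $g$ simultaneously.

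\smallskip

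The main obstacle I anticipate is the $T$ side: ensuring $\ol\ph\nmid\ol T$ is not automatic from mere division and genuinely requires exploiting separability, either through the lift-perturbation trick or through a coprimality argument with the derivative. The cleanest route is probably to argue directly that if both $g\mid\ol U$ and $g\mid\ol T$ held for every admissible lift, one could deduce $g^2\mid\ol F$ or $g\mid\gcd(\ol F,\ol{F'})$, contradicting separability; this reduces the whole lemma to the standard fact that a separable polynomial is coprime to its derivative modulo $p$. I would therefore structure the proof as: (i) division to get the shape $F=\ph U+pT$; (ii) separability $\Rightarrow g\nmid\ol U$; (iii) a short Hensel-type perturbation $\ph\mapsto\ph+ph$ to clear any factor $g$ from $\ol T$ while preserving (i) and (ii). The only real care needed is bookkeeping on degrees so that each division stays within $\deg<\deg\ph_0$ and the perturbation does not reintroduce a common factor.
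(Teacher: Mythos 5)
Your proposal is correct and is essentially the standard argument behind this lemma (which the present paper does not reprove but imports from \cite[Lemma 4.1]{BFT1}): Euclidean division by a monic lift $\ph_0$ gives $F=\ph_0U_0+pT_0$ since the remainder reduces to $0$ modulo $p$, separability of $\ol{F}$ forces $g\nmid\ol{U_0}$, and the perturbation $\ph=\ph_0+ph$ (take $h=1$ when $g\mid\ol{T_0}$) replaces $T_0$ by $T_0-hU_0$, whose reduction is not divisible by $g$ because $\ol{U_0}$ is invertible modulo $g$. The only weak point is the ``alternative'' route you sketch via coprimality of $\ol{F}$ and $\ol{F'}$ --- the real issue is solely whether $g\mid\ol{T}$ (since $g\nmid\ol{U}$ always holds), and that claim does not follow from derivative coprimality without further work --- but since your primary perturbation argument is complete, this does not affect the validity of the proof.
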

 The following lemma gives a sufficient  condition for a rational prime integer $p$ to be a prime common index divisor of a given  field $K$. For the proof, see \cite{R} and \cite[Theorems 4.33 and 4.34 ]{Na}.
\begin{lemma} \label{comindex}
	Let  $p$ be a  rational prime integer and $K$  a number field. For every positive integer $d$, let $P_d$  be the number of distinct prime ideals of $\Z_K$ lying above $p$ with residue degree $d$. If $ P_d > N_p(d)$ for some positive integer $d$, then $p$ is a prime common index divisor of $K$.
\end{lemma}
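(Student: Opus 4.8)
The plan is to prove the contrapositive: assuming that $p$ is \emph{not} a prime common index divisor of $K$, I will show that $P_d \le N_p(d)$ for every positive integer $d$. By the definition of the index $i(K)$ in (\ref{i(K)}), the statement ``$p$ is not a common index divisor'' means $p \nmid i(K)$, and hence there exists some $\eta \in \Z_K$ with $K = \Q(\eta)$ and $p \nmid (\Z_K : \Z[\eta])$. Fixing such an $\eta$ is the crux of the argument, since it is precisely the hypothesis under which Dedekind's factorization theorem becomes applicable.

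First I would invoke Dedekind's theorem (\cite[Theorem 4.33]{Na}, \cite{R}): because $p$ does not divide $(\Z_K : \Z[\eta])$, the prime ideal factorization of $p\Z_K$ mirrors the factorization of the reduction $\overline{P_\eta(x)}$ of the minimal polynomial $P_\eta(x)$ of $\eta$ modulo $p$. Writing $\overline{P_\eta(x)} = \prod_{i=1}^{g} \overline{g_i(x)}^{e_i}$ as a product of powers of pairwise distinct monic irreducible polynomials $\overline{g_i} \in \F_p[x]$, the primes $\pF_i$ of $\Z_K$ lying above $p$ are in bijection with the factors $\overline{g_i}$, with ramification index $e_i$ and residue degree $f(\pF_i/p) = \deg(\overline{g_i})$.

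The key counting step then follows at once. The prime ideals above $p$ of residue degree $d$ correspond exactly to those distinct irreducible factors $\overline{g_i}$ of degree $d$ occurring in $\overline{P_\eta(x)}$. Since these factors are pairwise distinct monic irreducible polynomials of degree $d$ in $\F_p[x]$, and since $N_p(d)$ is by definition the total number of such polynomials, their count cannot exceed $N_p(d)$. Therefore $P_d \le N_p(d)$ for every $d$, which contradicts the standing hypothesis $P_d > N_p(d)$. Consequently no generator $\eta$ with $p \nmid (\Z_K : \Z[\eta])$ can exist, i.e. $p$ divides $(\Z_K : \Z[\eta])$ for \emph{every} $\eta$ generating $K$, and hence $p \mid i(K)$.

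The argument is short and involves no computation; the only point requiring genuine care is the logical orientation. One must keep in mind that $P_d$ is an intrinsic invariant of the pair $(K,p)$, independent of any choice of $\eta$, whereas the bound $P_d \le N_p(d)$ is forced by the mere \emph{existence} of a single well-behaved generator. The subtle step is thus to read off correctly, from Dedekind's theorem, the bijection between residue-degree-$d$ primes and distinct degree-$d$ irreducible factors of $\overline{P_\eta(x)}$; once this is in place, the pigeonhole comparison against the finitely many monic irreducibles counted by $N_p(d)$ is immediate.
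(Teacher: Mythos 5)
Your proof is correct and is precisely the standard argument that the paper itself defers to: the paper gives no inline proof of this lemma, instead citing \cite{R} and \cite[Theorems 4.33 and 4.34]{Na}, which amount to exactly your two steps --- Dedekind's factorization theorem applied to a generator $\eta$ with $p \nmid (\Z_K : \Z[\eta])$, followed by the pigeonhole bound $P_d \le N_p(d)$ against the finitely many monic irreducibles of degree $d$ in $\F_p[x]$. Your handling of the logical orientation (the contrapositive via the gcd definition of $i(K)$, so that $p \nmid i(K)$ is equivalent to the existence of a single well-behaved generator) is also correct, so nothing is missing.
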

To apply the last lemma, one needs to know the number $N_p(m)$ of monic irreducible polynomials over $\F_p$ of degree $m$ which  is given by the following proposition. 
\begin{proposition}$($\cite[Proposition 4.35]{Na}$)$
	The number of  monic irreducible polynomials of degree $m$ in $\F_p[x]$ is given by:
	\begin{eqnarray*}
		N_p(m) = \frac{1}{m} \sum_{d \mid m} \mu (d) p^{\frac{m}{d}},
	\end{eqnarray*}where $\mu$ is the M\"{o}bius function. 
\end{proposition}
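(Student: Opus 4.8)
The plan is to derive the formula by M\"{o}bius inversion from the classical factorization of $x^{p^m}-x$ over $\F_p$. First I would recall the two basic facts from the theory of finite fields: the field $\F_{p^m}$ is precisely the splitting field of $x^{p^m}-x$ over $\F_p$, and $\F_{p^m}$ contains a subfield with $p^d$ elements if and only if $d \mid m$. From these one reads off that an element $\alpha \in \overline{\F_p}$ is a root of $x^{p^m}-x$ exactly when $\alpha \in \F_{p^m}$, which in turn happens exactly when the degree $d = [\F_p(\alpha):\F_p]$ of the minimal polynomial of $\alpha$ over $\F_p$ divides $m$.

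Next I would use this to obtain the exact factorization $x^{p^m} - x = \prod_{d \mid m} \prod_{g} g(x)$, where the inner product runs over all monic irreducible polynomials $g \in \F_p[x]$ of degree $d$. Indeed, every monic irreducible of degree $d \mid m$ has all of its roots in $\F_{p^m}$ and hence divides $x^{p^m}-x$, while no irreducible whose degree does not divide $m$ can divide it. Since the derivative of $x^{p^m}-x$ equals $-1$, the polynomial is separable, so each such irreducible factor occurs with multiplicity one and the displayed factorization is exact. Comparing the degrees of both sides then yields the counting identity $p^m = \sum_{d \mid m} d\, N_p(d)$.

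Finally I would apply M\"{o}bius inversion on the divisor lattice. Writing $f(d) = d\, N_p(d)$, the previous identity reads $p^m = \sum_{d \mid m} f(d)$, so the M\"{o}bius inversion formula gives $f(m) = \sum_{d \mid m} \mu(d)\, p^{m/d}$, that is $m\, N_p(m) = \sum_{d\mid m} \mu(d)\, p^{m/d}$; dividing by $m$ produces the stated expression. The only genuinely substantive ingredient is the structure theory of finite fields feeding the factorization step, whereas the inversion is a purely formal manipulation. Accordingly, I expect the care to be concentrated in justifying the separability of $x^{p^m}-x$ and the precise ``degree divides $m$'' correspondence between irreducible factors and subfields, rather than in any computation.
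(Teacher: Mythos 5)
Your proof is correct: the factorization $x^{p^m}-x=\prod_{d\mid m}\prod_{g} g(x)$ over $\F_p$, separability via the derivative $-1$, the degree count $p^m=\sum_{d\mid m} d\,N_p(d)$, and M\"{o}bius inversion together give exactly the stated formula. The paper itself offers no proof, citing the result from Narkiewicz \cite[Proposition 4.35]{Na}, and your argument is precisely the standard one found there, so there is nothing to reconcile.
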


\section{Proofs of main results}

In this section, we prove our results. Let us start by Theorem \ref{dn1}.

\begin{proof}[Proof of Theorem \ref{dn1}]
 In all cases, we prove that $K$ is not monogenic by showing that $p$ divides $i(K)$. For this reason, in view of Lemma \ref{comindex}, it is sufficient to show that the prime ideal factorization of $p\Z_K$ satisfies the inequality $ P_d > N_p(d)$ for some positive integer $d$.
Under the hypothesis  of  Theorem \ref{dn1}, we have   $$\ol{F(x)} =\ol{ (x^s+b)}^{p^r} \, \mbox{in} \,\, \F_p[x].$$Since $p$ does not divide $ sb$, the polynomial $x^s+\ol{b}$ is separable in $\F_p[x]$. Let $g(x)$ be a monic irreducible factor of  $x^s+\overline{b}$ in $\F_p[x]$ of degree $d$. Using Lemma \ref{lemtech}, we can select a monic lifting $\ph(x) \in \Z[x]$ of $g(x)$ such that  $$ x^s+b= \ph(x) U(x)+pT(x)$$ for some polynomials $U(x)$ and $T(x) \in \Z[x]$ such that $\overline{\ph(x)}$ does not divide $\overline{U(x)}\overline{T(x)}$. Set $M(x)=pT(x)-b$ and write 
\begin{eqnarray*}
F(x)&=& x^n+ax^{m}+b \nonumber \\
&=&(x^s)^{p^r}+ax^m+b \nonumber \\
&=&(\ph(x) U(x)+M(x))^{p^r}+ax^{m}+b \nonumber \\
&=& (\ph(x) U(x))^{p^r}+ \sum_{j=1}^{p^r-1} \binom{p^r}{j} M(x)^{p^r-j}U(x)^j \ph(x)^j + M(x)^{p^r} +ax^{m}+b.\nonumber
\end{eqnarray*}
Applying the binomial theorem, we have

\begin{eqnarray*}
	M(x)^{p^r}&=& (pT(x)-b)^{p^r} \\
	&=&p^{r+1}b^{p^r-1}T(x)+\sum_{j=0}^{p^r-2}(-1)^j\binom{p^r}{j}b^j (pT(x))^{p^r-j}+(-b)^{p^r} \\
	&=& p^{r+1}H(x)+(-b)^{p^r},
\end{eqnarray*}
where $$H(x)= b^{p^r-1}T(x)+\frac{1}{p^{r+1}}\sum_{j=0}^{p^r-2}(-1)^j\binom{p^r}{j}b^j (pT(x))^{p^r-j}. $$It follows that
  \begin{eqnarray*}
F(x)= (\ph(x) U(x))^{p^r}+ \sum_{j=1}^{p^r-1} \binom{p^r}{j} M(x)^{p^r-j}U(x)^j \ph(x)^j +p^{r+1} H(x)+ax^{m}+(-b)^{p^r}+b.
\end{eqnarray*}
Thus 
  \begin{eqnarray}\label{devn1}
F(x) =\sum_{j=0}^{p^r} A_j(x)\ph(x)^j,
\end{eqnarray}
 where 
\[  \begin{cases}
A_0(x) = p^{r+1} H(x)+ax^{m}+(-b)^{p^r}+b,\,\\
A_j(x) =\displaystyle \binom{p^r}{j} M(x)^{p^r-j}U(x)^j \mbox{for every } 1 \leq j \leq p^r.
\end{cases} \]
Using Lemma \ref{binomial}, we get  $$\omega_j = \n_p(A_j(x)) =  \nu_p(\binom{p^r}{j} M(x)^{p^r-j}U(x)^j)= \nu_p(\binom{p^r}{j})= r-\n_p(j)$$ and  $$\overline{\left(\dfrac{A_j(x)}{p^{\omega_j}}\right)}=   \dbinom{p^r}{j}_p  \overline{M(x)^{p^r-j}U(x)^j}$$   for every $1 \leq j \leq p^r$. Moreover,  $\overline{\ph(x)} $ does not divide $ \overline{\left(\dfrac{A_j(x)}{p^{\omega_j}}\right)}$   for every $1 \leq j \leq p^r$, since  $\overline{M(x) }= -\ol{b}\neq \bar{0} $ and  $\overline{\ph(x)} $ does not divide $ \overline{U(x)}$. So,   the 	above $\ph$-development (\ref{devn1}) of $F(x)$ is admissible if and only if $\ol{\ph(x)}$ does not divide $\overline{\left(\dfrac{A_0(x)}{p^{\omega_0}}\right)}$, where $\omega_0 = \n_p(A_o(x))$.
\begin{enumerate}
\item Suppose now that  $\mu < \min\{ \nu , r+1\}$, then $$\omega_0 = \n_p(A_o(x)) = \n_p( p^{r+1} H(x)+ax^{m}+(-b)^{p^r}+b)= \mu$$ and $$\overline{\left(\dfrac{A_0(x)}{p^{\omega_0}}\right)} = a_p x^{m}.$$ Hence, $\overline{\ph(x)} $ does not divide $\overline{\left(\dfrac{A_0(x)}{p^{\omega_0}}\right)}$, because $\ol{\ph(x)} \neq x $. It follows that the $\ph$-development (\ref{devn1}) of $F(x)$ is admissible. For every $k:=1, \ldots,N_p(d,s,b) $, let $\ph_k(x)$ be a monic lifting provided by Lemma \ref{lemtech} of some factor $g_k(x)$ of $\ol{F(x)}$ of degree $d$.  By Lemma \ref{admiss},	$N_{\ph_k}^{+}{(F)} = S_{k1}+ \cdots + S_{k\mu}$ has $\mu$ sides of degree $1$ each joining the points  $(0,\mu), (p^{r-\mu+1}, \mu - 1), \ldots, \, \mbox{and}\, (p^r,0)$ with respective slopes $\l_{k1}= \dfrac{-1}{p^{r-\mu+1}} = \dfrac{-1}{e_1}$,  $\l_{ki}= \dfrac{-1}{(p-1)p^{r-\mu+i-1}} = \dfrac{-1}{e_{i}}$ for every $i = 2, \ldots, \mu$ (see FIGURE 2). 	Thus, $R_{\l_{ki}}(F)(y)$ is irreducible in $\F_{\ph}[y]$ as it is of degree $1$. Then, $F(x)$ is $p$-regular. Applying  Theorem \ref{ore}, we see that $$p\Z_K =  \prod_{k=1}^{N_p(d, s, b)} \prod_{i=1}^{\mu} \pF_{ki}^{e_i} \aF,$$ where $\aF$ is a proper ideal of $\Z_K$, $\pF_{ki}$ is a prime ideal of $\Z_K$ of residue degree $f(\pF_{ki}/p) = \deg(\ph(x)) \times  \deg(R_{\l_{ki}}(F)(y)) = d$.  Then, the $N_p(d,s,b)$ monic irreducible factors of $F(x)$ of degree $d$ modulo $p$ provides  $\mu N_p(d,s,b)$ prime ideals of residue degree $d$ each. By using Lemma \ref{comindex},  if $N_p(d)<\mu N_p(d,s,b)$, then $p $ divides $ i(K)$, and so $K$ is not monogenic.
 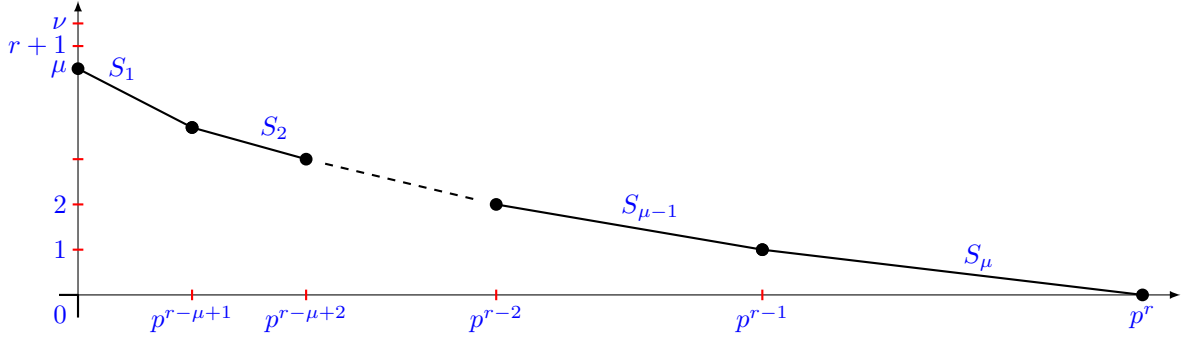
\begin{figure}[htbp] 
	\centering
	\begin{tikzpicture}[x=0.5cm,y=0.6cm]
	\draw[latex-latex] (0,6.5) -- (0,0) -- (29,0) ;
	\draw[thick] (0,0) -- (-0.5,0);
	\draw[thick] (0,0) -- (0,-0.5); 
%	\draw[thick,red] (1,-2pt) -- (1,2pt);
	\draw[thick,red] (3,-2pt) -- (3,2pt);
	\draw[thick,red] (6,-2pt) -- (6,2pt);
	\draw[thick,red] (11,-2pt) -- (11,2pt);
	\draw[thick,red] (18,-2pt) -- (18,2pt);
	\draw[thick,red] (28,-2pt) -- (28,2pt);
	\draw[thick,red] (-2pt,1) -- (2pt,1);
	\draw[thick,red] (-2pt,2) -- (2pt,2);
	\draw[thick,red] (-2pt,3) -- (2pt,3);
	%\draw[thick,red] (-2pt,4) -- (2pt,4);
	\draw[thick,red] (-2pt,5) -- (2pt,5);
	\draw[thick,red] (-2pt,5.5) -- (2pt,5.5);
		\draw[thick,red] (-2pt,6) -- (2pt,6);
	\node at (0,0) [below left,blue]{\footnotesize  $0$};
	%\node at (1,0) [below ,blue]{\footnotesize  $1$};
	\node at (3,0) [below ,blue]{\footnotesize $p^{r-\mu+1}$};
	\node at (6,0) [below ,blue]{\footnotesize  $p^{r-\mu+2}$};
	\node at (11,0) [below ,blue]{\footnotesize  $p^{r-2}$};
	\node at (18,0) [below ,blue]{\footnotesize  $p^{r-1}$};
	\node at (28,0) [below ,blue]{\footnotesize  $p^{r}$};
	\node at (0,1) [left ,blue]{\footnotesize  $1$};
	\node at (0,2) [left ,blue]{\footnotesize  $2$};
	%	\node at (0,3) [left ,blue]{\footnotesize  $3$};
	%	\node at (0,4) [left ,blue]{\footnotesize  $r=4$};
	\node at (0,5) [left ,blue]{\footnotesize  $\mu$};
	\node at (0,6) [left ,blue]{\footnotesize  $\n$};
		\node at (0,5.5) [left ,blue]{\footnotesize  $r+1$};
	%	\node at (0,6) [left ,blue]{\footnotesize  $\delta \ge 6$};
	\draw[thick,mark=*] plot coordinates{(0,5) (3,3.7)};
	\draw[thick,mark=*] plot coordinates{(3,3.7) (6,3)};
	%\draw[thick,mark=*] plot coordinates{(6,3) (10,2)};
	\draw[thick,mark=*] plot coordinates{(11,2) (18,1)};
	\draw[thick,mark=*] plot coordinates{(28,0) (18,1)};
%	\draw[thick, dashed] plot coordinates{(1.4,4.8) (2.6,4.1) };
	\draw[thick, dashed] plot coordinates{(6.5,2.9) (10.5,2.1) };
	\node at (0.5,4.5) [above right  ,blue]{\footnotesize  $S_{1}$};
	\node at (4.5,3.2) [above right  ,blue]{\footnotesize  $S_{2}$};
	\node at (14,1.4) [above right  ,blue]{\footnotesize  $S_{\mu-1}$};
	\node at (23,0.3) [above right  ,blue]{\footnotesize  $S_{\mu}$};
	%\node at (50,0.3) [above right  ,blue]{\footnotesize  $S_{5}$};
	\end{tikzpicture}
	\caption{    \large  $\npp{F}$ with respect to $p$ when  $\mu < \min\{ \nu, r+1\}$.\hspace{5cm}}
\end{figure}

\item If  $\nu < \min\{ \mu, r+1\}$, then $\omega_0= \n $ and $$\overline{\left(\dfrac{A_0(x)}{p^{\omega_0}}\right)} = (b+(-b)^{p^r})_p.$$ So,  $\overline{\ph(x)} $ does not divide $ \overline{\left(\dfrac{A_0(x)}{p^{\omega_0}}\right)}$. Thus, the $\ph$-development (\ref{devn1}) of $F(x)$ is admissible. Now, we proceed similarly to the proof of the first point, we show that $p\mid i(K)$.
\item  If  $\mu = \n \le r  $, then  $\omega_0 = \n = \mu$ and $$\overline{\left(\dfrac{A_0(x)}{p^{\omega_0}}\right)} =a_p x^m +  (b+(-b)^{p^r})_p.$$ In this case, we consider the monic irreducible factors of $x^s+\ol{b}$ of degree $d$ in $\F_p[x]$ which do not divide $ \ol{a_p }x^m +  \ol{(b+(-b)^{p^r})_p}$ which guarantee that $\ol{\ph(x)}$ does not divide $\overline{\left(\dfrac{A_0(x)}{p^{\omega_0}}\right)}$. The number of these factors is $ N_p(d,s,b)[m, \frac{b+(-b)^{p^r}}{a}]$.
\item Now, we deal with the case when  $ r+1  \le \min\{ \nu, \mu\}$. Let $$A_0(x)= \sum_{j = 0}^{l} a_j(x) \ph(x)^j$$ be the $\ph$-adic development of the polynomial $A_0(x)$  ($ \deg(a_j(x)) <  \deg(\ph(x))$) with $l = \lfloor \dfrac{\deg(A_0(x))}{d} \rfloor $ which can exceed $ p^r $ according to the degree of the polynomial $T(x)$ (see the above expressions of $H(x)$ and $A_0(x)$).    Substituting in (\ref{devn1}), we see that 
\begin{eqnarray}\label{devn12}
F(x) = \sum_{j=0}^{l}  B_j(x) \ph(x)^j,
\end{eqnarray}
where
\[  \begin{cases}
B_0(x)=a_0(x),\,\\
B_j(x)= \displaystyle \binom{p^r}{j} M(x)^{p^r-j}U(x)^j + a_j(x)= A_j(x)+a_j(x)\, \mbox{for every } \,j=1, \ldots, p^r,\,\\
B_j(x)= a_j(x) \,\, \mbox{if} \, j \ge p^r+1.
\end{cases} \]
 Since $l(\npp{F})= \n_{\ol{\ph(x)}}(\ol{F(x)})=p^r$, we are  only interested in the first $p^r$ coefficients in the above $\ph$-development  (\ref{devn12}) of $F(x)$.	Since  $ r+1  \le \min\{ \nu, \mu\}$,  $\n_p(A_0(x)) \ge r+1$, and so $\n_p(a_j(x)) \ge r+1$ for every $j=0, \ldots, l$. On the other hand, as $\n_p(A_j)  \le r$,  $$\omega_j^{'}=\n_p(B_j(x)) = \n_p(A_j(x)) = \omega_j = \n_p(\binom{p^r}{j}) = r-\n_p(j),$$ $$\overline{\left(\dfrac{B_j(x)}{p^{\omega_j^{'}}}\right)} = \overline{\left(\dfrac{A_j(x)}{p^{\omega_j}}\right)},$$ and so $\overline{\ph(x)} $ does not divide $ \overline{\left(\dfrac{B_j(x)}{p^{\omega_j^{'}}}\right)}$ for every $j=1, \ldots, p^r$. We have also, $\overline{\ph(x)} $ does not divide $ \overline{\left(\dfrac{B_0(x)}{p^{\omega_0^{'}}}\right)}$, because $\deg (a_0(x)) < \deg(\ph(x))$, where   $\omega_0^{'} = \n_p(B_0(x))=\n_p(a_0(x))\ge r+1$. It follows that the $\ph$-development (\ref{devn12})   of $F(x)$ is admissible. By Lemma \ref{admiss}, $\npp{F}= S_o+ \cdots+S_r$ has $r+1$ sides of degree $1$ each joining the points $(0, \omega_0^{'}), (1,r), (p,r-1), \ldots, \, \mbox{and}\, (p^r,0)$ with respective slopes $\l_0 = \omega_0^{'}-r =  \dfrac{\omega_0^{'}-r}{e_1} \le -1$, $\l_i = \dfrac{-1}{p^i-p^{i-1}}= \dfrac{-1}{e_i}$ for every $i=1, \ldots, r$.  By Theorem \ref{ore}, we get $$p \Z_K = \prod_{k=1}^{N_p(d, s, b)} \prod_{i=0}^{r} \pF_{ki}^{e_i} \bF,$$ where $\bF$ is a proper ideal of $\Z_K$, and $\pF_{ki}$ is a prime ideal of $\Z_K$ with residue degree $f(\pF_i/p)= d $.   By Lemma \ref{comindex}, if $N_p(d) <(r+1) N_p(s,d,b)$, then $p $ divides $ i(K)$. Consequently,  $K$ is not monogenic.  
\end{enumerate}

\end{proof}

\begin{remark} In Theorem \ref{dn1}(4), if  $ r+1  < \min\{ \nu, \mu\}$, then $\npp{F}$ is exactly the Newton polygon joining the points  $(0, r+1), (1,r), (p,r-1), \ldots, \, \mbox{and}\, (p^r,0)$.  Let us explain this fact.
   By using lemma \ref{binomial}, we have 
 $$\n_p(\binom{p^r}{j}b^j (pT(x))^{p^r-j}) = r-\n_p(j)+p^r-j  \ge r+2$$ for every $1 \le j \le p^r-2$ (it suffice to distinct the two cases when $p$ divides or not $j$). It follows that $\ol{H(x)} = \ol{b^{p^r-1}T(x)} \neq \ol{0}$. Therefore, $\omega_0= \n_p(A_0(x))= r+1$. So,  $$\overline{\left(\dfrac{A_0(x)}{p^{\omega_0}}\right)} = \ol{H(x)} = \ol{b^{p^r-1}T(x)}.$$
Consequently, the $\ph$-adic development  (\ref{devn1}) is admissible.   In this case, we don't need to consider the $\ph$-adic development of $A_0(x)$ to determine $\npp{F}$.
 
\end{remark}
\begin{proof}[Proof of Theorem \ref{dn2}]
Under the assumptions of Theorem \ref{dn2}, we see that $$F(x) \equiv  x^m(x^{n-m}+a) \equiv x^m (x^u+a)^{p^k} \md p.$$  Since $p$  does not divide $ au$,  the polynomial $x^u+\ol{a}$ is separable in $\F_p[x]$. It follows that if $x^u+\ol{a}= \prod_{i=1}^{t} \ol{\ph_i(x)}$ in  $\F_p[x]$, then $$\ol{F(x)} = x^m (\prod_{i=1}^{t} \ol{\ph_i(x)})^{p^k} \, \mbox{in} \, \F_p[x].$$ Fix $\ph(x) = \ph_i(x)$ for some $1 \le i \le t$ and let  $d=\deg(\ph(x))$.  By Lemma \ref{lemtech}, let $V(x), R(x)\in \Z[x]$ such  that $$x^u + a = \ph(x)V(x)+pR(x)$$ with $\ol{\ph(x)}$ does not divide $\ol{V(x)R(x)}$.
By setting $N(x)= pR(x)-a $ and $$L(x)=  a^{p^k-1}R(x)+\frac{1}{p^{k+1}}\sum_{j=0}^{p^k-2}(-1)^j\binom{p^k}{j}a^j (pR(x))^{p^k-j},$$ we obtain  that 

 \begin{eqnarray*}
F(x)= (\ph(x) V(x))^{p^k}x^m+ \sum_{j=1}^{p^k-1} \binom{p^k}{j}x^m N(x)^{p^k-j}V(x)^j \ph(x)^j +p^{k+1} x^m L(x)+((-a^{p^k})+a)x^{m}+b.
\end{eqnarray*}
It  follows that
\begin{eqnarray}\label{devn2}
F(x) = \sum_{j=0}^{p^k} A_j(x) \ph(x)^j,
\end{eqnarray}
 where 

\[  \begin{cases}
A_0(x) = p^{k+1}x^m L(x)+((-a)^{p^k}+a)x^{m}+b,\,\\
A_j(x) =\displaystyle \binom{p^k}{j}x^m N(x)^{p^K-j}V(x)^j \mbox{for every } 1 \leq j \leq p^k.
\end{cases} \]

\end{proof}
To complete  the proof, we proceed analogously to the proof  of Theorem \ref{dn1}, we show that $p $ divides $ i(K)$. Using 	Lemma \ref{binomial} and the fact that $\ol{\ph(x)}$ does not divide $\ol{R(x)V(x)}$, we see that the above $\ph$-development of $F(x)$ is admissible. By Lemmas \ref{admiss} and \ref{binomial}, we determine $\npp{F}$ and  we get that $F(x)$ is $p$-regular. Applying Theorem \ref{ore} to factorize $p\Z_K$ in the ring  $\Z_K$. This factorization combined with the conditions of Theorem \ref{dn2} satisfies the sufficient condition of Lemma \ref{comindex}.   Consequently, $p$ divides $i(K)$.
\begin{proof}[Proof of Corollaries \ref{cordn1} and \ref{cordn2}] 
For every $k\ge 1$, we have the following factorization:
$$\ol{x^{2^k}-1}=\ol{(x-1)(x-2)U(x)} \, \mbox{in}\, \F_3[x],$$
where $\ol{\ph_i(x)}$ does not divide $\ol{U(x)}$ for $i=1,2$. It follows that $N_3(1,2^k,-1)=2$ for every $k\ge1$. Also, the polynomial $\ol{x^2+1}$ is irreducible in $\F_3[x]$, then $N_3(2,2,1)=1$. On the other hand $$\ol{x^4+1}= \ol{(x^2-x-1)(x^2+x-1)} \, \mbox{in}\, \F_3[x],$$ 
then $N_3(2,2^2,1)=2$. Using these factorizations and a direct application of Theorems \ref{dn1} and \ref{dn2}, we conclude the two corollaries.    
\end{proof} 
\begin{proof}[Proof Theorem \ref{d6}]
	We recall that Corollary \ref{cordn1}(5) implies that if $ a \equiv 0 \md 9\, \mbox{and} \, b \equiv  -1  \md {9}$, then $K$ is not monogenic. Moreover, using the proof of Theorem \ref{dn1}, we see that $$3\Z_K=\pF_1 \pF_2 \pF_3^2 \pF_4^2$$
		with residue degrees $f(\pF_k/3)=1$ for $k=1,2,3,4$. By Using Engstom's table concerning the index of number fields of degrees less than $7$ (see \cite[Page 234]{Engstrom}), we see that $\n_3(i(K))= 1$.
	 Now, we deal with the case when $a\equiv 0\md 8$ and $b\equiv -1 \md 8$. In this case, we have   $\ol{F(x)}= (\ol{\ph_1(x)\ph_2(x)})^2$ in $\F_2[x]$, where  $\ph_1(x)=x-1$ and $\ph_2=x^2+x+1$. Write
	\begin{eqnarray}\label{dev6}
		F(x)&=& x^6+ax^{m}+b \nonumber \\
		&=&(x^3-1+1)^2+ax^m+b \nonumber \\
		&=&(\ph_1(x)\ph_2(x)+1)^2+ax^m+b \nonumber \\
		&=& (\ph_1(x)\ph_2(x))^2+ 2\ph_1(x)\ph_2(x)+ax^m+1+b.
	\end{eqnarray}
Let $\mu=\n_2(a)$, $\n=\n_2(1+b)$, and $a_0(x)=ax^m+1+b$. Since $a\equiv 0\md 8$ and $b\equiv -1 \md 8$,  $\omega_0=\n_2(a_0(x)) = \min\{\mu, \n\} \ge 3$. If $\mu > \n$,  	then  $\omega_0=\n$ and  $\ol{\ph_2(x)}$ does not divide $\ol{(\dfrac{a_0(x)}{2^{\n}})} = \ol{1}$. It follows that the above $\ph_i$-development (\ref{dev6}) of $F(x)$ is admissible for $i=1,2$. By Lemma \ref{admiss}, for  $i=1,2$, $N^{+}_{\ph_i}(F)=S_{i1}+S_{i2}$ has two sides of degree $1$ each joining the points $(0, \n), (1,1),\, \mbox{and}(2,0)$ with respective slopes $\l_{i1} \le -2$ and $\l_{i2}=-1$ (see FIGURE 3). Thus, $R_{\l_{ik}}(F)(y)$ is irreducible over $\F_{\ph_i}$. It follows that the polynomial $F(x)$ is $2$-regular. Applying Theorem \ref{ore}, we obtain that $$2\Z_K = \pF_1 \pF_2 \pF_3 \pF_4$$ with residue degrees $$f(\pF_1/2)=f(\pF_2/2)=1 \,\, \mbox{and}\, f(\pF_3/2)=f(\pF_4/2)=2$$
 Similarly, if $\n > \mu$, we see that  $$2\Z_K = \pF_1 \pF_2 \pF_3 \pF_4$$ with residue degrees $$f(\pF_1/2)=f(\pF_2/2)=1 \,\, \mbox{and}\, f(\pF_3/2)=f(\pF_4/2)=2.$$ Now, we deal with the case when $\mu = \n $.	 Write $$ax^m+1+b = 2^{\mu}(a_2x^m+(1+b)_2)=2^{\mu}(\ph_2U_2(x))+R_2(x)),$$ where $U_2(x)$ and $R_2(x)$ are respectively  the quotient and the remainder upon the Euclidean division of $a_2x^m+(1+b)_2$ by $\ph_2(x)$. 
Substituting in (\ref{dev6}), we get that 
 \begin{eqnarray}\label{dev62}
 F(x)= (\ph_1(x)\ph_2(x))^2+ (2\ph_1(x)+2^{\mu}U_2(x))\ph_2(x)+2^{\mu}R_2(x).
 \end{eqnarray}
	Since $\ol{\ph_2}(x)$ does not divide $\ol{\ph_1(x)R_2(x)}$, the above $\ph_2$-development (\ref{dev62}) of $F(x)$ is admissible. By Lemma \ref{admiss}, $N^{+}_{\ph_2}(F)=S_1+S_2$ has two sides of degree $1$ each joining the points $(0, \mu), (1,1),\, \mbox{and}(2,0)$ with respective slopes $\l_1 \le -2$ and $\l_2=-1$. Applying Theorem \ref{ore}, the irreducible factor $\ph_2(x)$ of $F(x)$ provides two prime ideals of $\Z_K$ lying above $2$ of residue degree $2$ each. Similarly, we see that the irreducible factor $\ph_1$ provides two prime ideals of $\Z_K$ of  residue degree  $1$ each lying above $2$. We conclude that  if $a\equiv 0\md 8$ and $b \equiv -1 \md 8$, then   $$2\Z_K = \pF_1 \pF_2 \pF_3 \pF_4$$ with residue degrees $$f(\pF_1/2)=f(\pF_2/2)=1 \,\, \mbox{and}\, f(\pF_3/2)=f(\pF_4/2)=2,$$ By using the above mentioned Engstrom's table, $\n_2(i(K))=2$. Consequently, $K$ is not monogenic. This complete the proof of the theorem.
\end{proof}

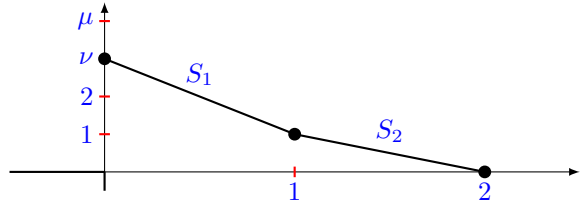
\begin{figure}[htbp]

	\centering
	
	\begin{tikzpicture}[x=2
	.5cm,y=0.5cm]
	\draw[latex-latex] (0,4.5) -- (0,0) -- (2.5,0) ;

	\draw[thick] (0,0) -- (-0.5,0);
	\draw[thick] (0,0) -- (0,-0.5);
	
	\draw[thick,red] (1,-2pt) -- (1,2pt);
	\draw[thick,red] (2,-2pt) -- (2,2pt);
%	\draw[thick,red] (3,-2pt) -- (3,2pt);
%	\draw[thick,red] (4,-2pt) -- (4,2pt);
	%\draw[thick,red] (5,-2pt) -- (5,2pt);
	%\draw[thick,red] (6,-2pt) -- (6,2pt);
	%\draw[thick,red] (7,-2pt) -- (7,2pt);
	%\draw[thick,red] (8,-2pt) -- (8,2pt);
	%\draw[thick,red] (9,-2pt) -- (9,2pt);
	\draw[thick,red] (-2pt,1) -- (2pt,1);
	\draw[thick,red] (-2pt,2) -- (2pt,2);
	\draw[thick,red] (-2pt,3) -- (2pt,3);
	\draw[thick,red] (-2pt,4) -- (2pt,4);	
%	\draw[thick,red] (-2pt,5) -- (2pt,5);	
	\node at (1,0) [below ,blue]{\footnotesize  $1$};
	\node at (2,0) [below ,blue]{\footnotesize $2$};
%	\node at (3,0) [below ,blue]{\footnotesize  $3$};
	%\node at (4,0) [below ,blue]{\footnotesize  $4$};
	%\node at (5,0) [below ,blue]{\footnotesize  $5$};
	%\node at (6,0) [below ,blue]{\footnotesize  $6$};
	%\node at (7,0) [below ,blue]{\footnotesize $7$};
	%\node at (8,0) [below ,blue]{\footnotesize  $8$};
	\node at (0,1) [left ,blue]{\footnotesize  $1$};
	\node at (0,2) [left ,blue]{\footnotesize  $2$};
	\node at (0,3) [left ,blue]{\footnotesize  $\n$};
	\node at (0,4) [left ,blue]{\footnotesize  $\mu$};
	%\node at (0,5) [left ,blue]{\footnotesize  $5$};
	\draw[thick, mark = *] plot coordinates{(0,3) (1,1) (2,0)};
	%\draw[thick, only marks, mark=x] plot coordinates{(1,1) (1,2)  (2,1)     };
	%\draw[thick, only marks, mark=*] plot coordinates{(3,3) (1,3) (2,2) (5,3) (7,3) (6,2) };	
	\node at (0.5,2) [above  ,blue]{\footnotesize $S_{1}$};
	\node at (1.5,0.5) [above   ,blue]{\footnotesize $S_{2}$};
	%\node at (6,0.4) [above   ,blue]{\footnotesize $S_{3}$};
	\end{tikzpicture}
	\caption{ The $\ph_i$-principal Newton polygon  $N^{+}_{\ph_i}(F)$ for $i=1,2$ with respect to $\n_2$ when  $\n <\mu$.}
\end{figure}

which is equal to $\frac{1}{d} \sum_{k \mid d} \mu (k) p^{\frac{d}{k}}$, where $\mu$ is the M\"{o}ubius function

\section{Examples}
Let $F(x) \in \Z[x]$ be a	 monic irreducible polynomial  and $K$ a number field generated by a complex root of $F(x)$.

\begin{enumerate}
\item  If $F(x)=x^{18}+342 x^m+26$, then by Corollary \ref{cordn1}(6), $3$ divides $i(K)$, and so, $K$ cannot be monogenic. 
\item By Theorem \ref{d6}, the sextic pure field $K=\Q(\sqrt[6]{63})$ cannot be monogenic.
\item If $F(x)=x^{12}-19x^6+171$, then by Corollary \ref{cordn2}(6),  $3$ divides $i(K)$. Consequently, $K$ is not monogenic.
\item  If $F(x)= x^6+3249x^m+152$, then by Theorem \ref{d6}, $3$ divides $i(K)$. Hence, $K$ is not monogenic.
\end{enumerate}


\begin{thebibliography}{00}

	
	%\bibitem{ANHN} S. Ahmad, T, Nakahara and S.M. Husnine, Power integral bases for certain  pure sextic fields, \textit{I. J. Number Theory}, \textbf{10(8)}, (2014), 2257--2265.
	\bibitem{ANHN6} S. Ahmad, T, Nakahara and S.M. Husnine, Non-monogenesis of a family of pure sextic fields, \textit{Arch. Sci (Geneva)}, \textbf{65(7)}, (2012), 42--49.
%	\bibitem{AN}  S. Ahmad, T. Nakahara, and A. Hameed, On certain pure sextic fields related to a problem of Hasse, \textit{Int. J. Alg. and Comput} \textbf{26(3)} (2016), 577--583.
	\bibitem{B} H. Ben Yakkou,  On monogenity of certain number fields defined by trinomial of type $x^8+ax+b$,  Acta. Math. Hungar, (2022), https://doi.org/10.1007/s10474-022-01206-5.
\bibitem{BFT1} H. Ben Yakkou and L. El Fadil, On monogenity of certain number fields defined by trinomials,  Functiones et Approximatio, Commentarii Mathematici, (2022), arXiv:2109.08765.
%{\color{red} \bibitem{Bau}M. Bauer, Zur allgemeinen Theorie der algebraischen Gr\"{o}ssen, \textit{J. Reine Angew  Math}, \textbf{132}, (1907), 21--32.}
\bibitem{BF} H. Ben Yakkou and L. El Fadil, On monogenity of certain  pure number fields defined by $x^{p^r}-m$,  \textit{Int. J. Number Theory}, (2021), doi: 10.1142/S1793042121500858.
\bibitem{BFC} H. Ben Yakkou, A. Chillali, and L. El Fadil, On Power integral bases for certain pure number fields defined by $ x^{2^r \cdot 5^s}- m $, \textit{ Comm. in Algebra}, \textbf{49(7)}, (2021), 2916--2926.
%\bibitem{Blake} I. F. Blake, S. Gao and R. C. Mullin, Explicit Factorization
%of $x^{2^k}+1$ over $\F_p$ with prime $p \equiv 3 \md 4$, \textit{AAECC} \textbf{(4)}, (1993), 89--94.
 
 
%\bibitem{BK} H. Ben Yakkou and O. Kchit, On Power integral bases for certain pure number fields defined by $ x^{3^r}- m $, \textit{SPJM}, (2021), DOI:
%10.1007/s40863-021-00251-2.
\bibitem{Gacta2}  Y. Bilu, I. Ga\'{a}l, and K. Gy\H{o}ry, Index form equations in sextic fields: a hard computation, \textit{Acta Arithmetica}\textbf{
	115(1)}, (2004), 85--96.
%\bibitem{Blake} I. F. Blake, S. Gao and R. C. Mullin, Explicit Factorization
%of $x^{2^k}+1$ over $\F_p$ with prime $p \equiv 3 \md 4$, \textit{AAECC} \textbf{(4)}, (1993), 89--94.
	\bibitem{Co} H. Cohen,  A Course in Computational Algebraic Number Theory,  \textit{GTM 138, Springer-Verlag Berlin  Heidelberg}, (1993).
\bibitem{R} R. Dedekind, \"Uber den Zusammenhang zwischen der Theorie der Ideale und der Theorie der h\"oheren Kongruenzen, \textit{G\"ottingen Abhandlungen}, \textbf{23}, (1878), 1--23.
\bibitem{DS}  C. T. Davis and B. K.  Spearman, The index of quartic field defined by a trinomial $x^4+ax+b$, \textit{J. of Alg and its App},  \textbf{17(10)}, (2018), 1850197.

\bibitem {Eljnt}  L. El Fadil, On non monogenity of certain number fields defined by trinomials $x^6+ax^3+b$, \textit{Journal of Number Theory}, (2022), DOI: https://doi.org/10.1016/j.jnt.2021.10.017.
%\bibitem {El6}  L. El Fadil, On Power integral bases for certain pure sextic fields, \textit{Bol. Soc. Paran. Math}, (2020), doi:10.5269/bspm.42373.   


%\bibitem {EL24} L. El Fadil, On  power integral basis  for certain pure number fields defined by $x^{24}-m$. \textit{ Stud. Sci. Math. Hung}, \textbf{ 57(3)}, (2020), 397--407.
%\bibitem{El} L. El Fadil, On Newton polygon's techniques and factorization of polynomial over henselian valued fields, \textit{J. of Algebra and its Appl}, (2020), doi: S0219498820501881.
%\bibitem {E07} L. El Fadil, Computation of a power integral basis of a pure cubic number
%field, \textit{Int. J. Contemp. Math. Sci} \textbf{2(13-16)}, (2007), 601--606. 
\bibitem{Fcom} L. El Fadil, On common index divisors and monogenity of certain number fields defined by $x^5+ax^2+b$, Communications in Algebra,  (2022), DOI: 10.1080/00927872.2022.2025820. 
\bibitem{EMN}  L. El Fadil, J. Montes, and E. Nart,  Newton polygons and $p$-integral bases of quartic number fields \textit{J. Algebra and Appl}, \textbf{ 11(4)}, (2012), 1250073.

\bibitem{Engstrom} H. T. Engstrom, On the common index divisors of an algebraic numbar field, \textit{Amer. Math. Soc}, \textbf{32(2)}, (1930), 223--237.
\bibitem{G19} I. Ga\'al, Diophantine equations and power integral bases, Theory and algorithm, \textit{Second edition, Boston, Birkh\"auser}, (2019).
\bibitem{Ga21}    I. Ga\'{a}l, An experiment on the monogenity of a family of trinomials, \textit{JP  Journal  of  Algebra  Number  Theory  Appl}, \textbf{51(1)}, (2021), 97--111.
\bibitem{Gacta1} I. Ga\'{a}l and  K. Gy\H{o}ry, Index form equations in quintic fields, \textit{Acta Arithmetica}, \textbf{89(4)}, (1999), 379-396.
 \bibitem{GP} I. Ga\'al, A. Peth\H{o}, and M. Pohst, On the resolution of index form equations in quartic number fields, \textit{J. Symbolic Comput}, \textbf{16}(1993), 563--584.
\bibitem{GRN} I. Ga\'{a}l and L. Remete, Non-monogenity in a family of octic fields, \textit{Rocky Mountain J. Math}, \textbf{47(3)}, (2017), 817--824.
\bibitem{GR17}  I. Ga\'al and L. Remete, Power integral bases and monogenity of pure  fields, \textit{J. of Number Theory}, \textbf{173}, (2017), 129--146.
 
%\bibitem{Gras}  M. N. Gras, Non monog\'{e}n\'{e}it\'{e} de l'anneau des entiers des extensions cycliques de $\Q$ de degr\'{e}
%premier $l \ge 5 $, \textit{J. Number Theory},  \textbf{23(3)}, (1986), 347--353. 
 \bibitem{Greenfield}  G. R. Greenfield and D. Drucker, On the discriminant of a trinomial, \textit{Linear Algebra and its  Applications}, \textbf{(62)}, (1984), 105--112. 
\bibitem{Nar}  J. Gu\`{a}rdia, J. Montes, and E. Nart, Newton polygons of higher order in algebraic number theory, \textit{ Tran. Math. Soc. American} \textbf{364(1)}, ( 2012), 361--416.
	%\bibitem{HN8} A.Hameed  and  T.Nakahara, Integral  bases  and  relative  monogenity  of pure octic fields, \textit{ Bull. Math. Soc. Sci. Math. R \'{e}pub. Soc. Roum}, \textbf{  58(106)},  no. 4, (2015), 419--433. 
%\bibitem{Ha}   H. Hasse,  Zahlentheorie, \textit{Akademie-Verlag, Berlin}, (1963).

\bibitem{He}  K. Hensel, Theorie der algebraischen Zahlen, \textit{Teubner Verlag, Leipzig, Berlin}, 1908.
\bibitem{Hen} K. Hensel, Arithemetishe untersuchungen uber die gemeinsamen ausserwesentliche Discriminantentheiler einer Gattung, \textit{J. Reine Angew  Math}, \textbf{113}, (1894), 128--160.
\bibitem{Hecor} K. Hensel, Untersuchung der Fundamentalgleichung einer Gattung für eine reelle Primzahl als Modul und Bestimmung der Theiler ihrer Discriminante,  \textbf{(113)}, (1894), 61-- 83. 
%\bibitem{Hedisc} K. Hensel,  Arithmetische Untersuchungen über Discriminanten und ihre ausserwesentlichen
%Theiler, Dissertation, Univ. Berlin, (1884).




\bibitem{Smt}	R. Ibarra, H. Lembeck, M. Ozaslan, H. Smith, and K. E. Stange, Monogenic fields arising from trinomials, arXiv:1908.09793v2.

\bibitem{JK}  A. Jakhar and   S.Kumar,  On non-monogenic number fields defined by $x^6+ax+b$, \textit{Canadian Math. Bull}, doi: 10.4153/S0008439521000825.
\bibitem{JKS} A. Jakhar,  S.K. Khanduja, and N. Sangwan, Characterization of primes dividing the index of a trinomial, \textit{I. J. Number Theory
}, \textbf{13(10)}, (2017), 2505--2514.
 \bibitem{JW} { L. Jones and D. White, Monogenic trinomials with non-squarefree discriminant, \textit{International Journal of Mathematics}, doi: 10.1142/S0129167X21500890, (2021).}
 \bibitem{JP}L. Jones and T. Phillips, Infinite families of monogenic trinomials and their Galois
 groups, \textit{ Int. J. Math},  \textbf{29(5)}, (2018), Article ID 1850039, 11p.
\bibitem{MN92}  J. Montes and E. Nart, On theorem of Ore, \textit{Journal of Algebra }, \textbf{146(2)}, (1992), 318--334. 

%\bibitem{MNS} Y. Motoda, T. Nakahara and S. I. A.
%Shah, On a problem of Hasse, \textit{ J. Number Theory} \textbf{96}, (2002), 326-334.
\bibitem{Na} W. Narkiewicz, Elementary and Analytic Theory of Algebraic Numbers, \textit{Third Edition, Springer}, (2004).
\bibitem{PP} A. Peth\"{o} and M. Pohst, On the Indices of Multiquadratic Number Fields, \textit{ Acta Arithmetica}, \textbf{153(4)}, (2012), 393--414.
\bibitem{Smtacta} H. Smith, The monogenity of radical extension, \textit{Acta Arithmitica}, \textbf{198}(2021), 313--327.   
\bibitem{O}  O. Ore, Newtonsche Polygone in der Theorie der algebraischen Korper, \textit{ Math. Ann} \textbf{99}, (1928), 84--117.
 \bibitem{Zylinski}E. Zylinski,  Zur Theorie der ausserwesentlicher discriminamtenteiler algebraischer korper, \textit{Math. Ann}, \textbf{(73)}, (1913), 273--274.
 
\end{thebibliography}
\end{document}